\documentclass[11pt]{amsart}
\usepackage[T1]{fontenc}
\usepackage{lmodern}
\usepackage{amsmath,amssymb,mathtools}
\usepackage{microtype}
\usepackage{enumitem}
\usepackage[hidelinks]{hyperref}
\hypersetup{
  pdftitle={Second-order fusion asymptotics for Sine beta correlation functions},
  pdfauthor={Weiyang Fang}
}
\usepackage[margin=1.0in]{geometry}

\newtheorem{theorem}{Theorem}[section]
\newtheorem{proposition}[theorem]{Proposition}
\newtheorem{lemma}[theorem]{Lemma}
\newtheorem{corollary}[theorem]{Corollary}
\newtheorem{conjecture}[theorem]{Conjecture}
\theoremstyle{remark}
\newtheorem{remark}[theorem]{Remark}

\newcommand{\Sine}{\mathsf{Sine}}
\newcommand{\HP}{\mathsf{HP}}
\newcommand{\CJ}{\mathsf{CJ}}
\newcommand{\E}{\mathbb E}
\newcommand{\R}{\mathbb R}
\newcommand{\PV}{\operatorname{PV}}

\title[Second-order fusion asymptotics]{Second-order fusion asymptotics for $\Sine_\beta$ correlation functions}
\author{Weiyang Fang}
\address{Independent Researcher, Tokyo, Japan}
\email{weiyang.fang@hotmail.com}
\date{August 24, 2026}
\subjclass[2020]{60B20, 60G55, 82B21}
\keywords{$\Sine_\beta$ process, correlation functions, circular Jacobi beta ensemble, Hua--Pickrell process, stochastic zeta function, fusion asymptotics, Ward identity}

\begin{document}
\begin{abstract}
Recent work gives an all-$\beta$, all-order stochastic-zeta representation of the correlation functions of the $\Sine_\beta$ process and determines their leading Vandermonde asymptotics when several variables merge. We compute the first nontrivial correction throughout the regime $m\beta>1$. If $a_1,\ldots,a_m$ are distinct real numbers and
\[
V(a)=\sum_{1\le i<j\le m}(a_i-a_j)^2,
\]
then, as $\varepsilon\to0$,
\[
\rho^{(m)}_\beta(\varepsilon a_1,\ldots,\varepsilon a_m)
=C^{(m)}_\beta |\varepsilon|^{\beta\binom m2}\prod_{i<j}|a_i-a_j|^\beta
\left[1-\frac{\beta^2 V(a)}{8(m\beta-1)(2m+1)}\varepsilon^2+o(\varepsilon^2)\right].
\]
For $m=2$ the normalized second-order coefficient is $-\beta^2/[40(2\beta-1)]$ for every $\beta>1/2$. The proof combines a finite-$N$ rotational Ward identity, exact Hua--Pickrell trace moments, compact moment bounds for the stochastic-zeta entire function and its derivatives, and a quantitative multivariate expectation--Taylor lemma. As a by-product we evaluate
\[
\E_{\HP_{\beta,m\beta/2}}\sum_x x^{-2}=\frac{m\beta}{4(m\beta-1)(2m+1)}.
\]
The pole at $m\beta=1$ marks the boundary of the present second-moment argument and suggests a transition in the form of the next fusion correction.
\end{abstract}
\maketitle

\section{Introduction}
The $\Sine_\beta$ process is the translation-invariant bulk limit of beta ensembles, with intensity $1/(2\pi)$ in the normalization used below. It admits several complementary descriptions, including the Brownian carousel and random Dirac operator formulations of Valk\'o and Vir\'ag \cite{VV2009,VV2017}, and the circular-ensemble operator limits of Li and Valk\'o \cite{LiValko2022}. Until recently, explicit information on correlation functions for general $\beta$ was scarce outside the classical values $\beta=1,2,4$.

Qu and Valk\'o \cite{QuValko2025} obtained a stochastic differential equation representation of the two-point function for every $\beta>0$, as well as power-series expansions for even integer $\beta$. More recently, Qiu, Qu, Yuan, Valk\'o, and Venancio derived an explicit special-function representation for the $\Sine_6$ pair correlation \cite{QiuEtAl2026}. Assiotis and Najnudel \cite{AssiotisNajnudel2026} subsequently proved the first all-order, all-$\beta$ formula. In their normalization, for $m\in\mathbb N$ and $\beta>0$,
\begin{equation}\label{eq:AN}
\rho^{(m)}_\beta(x_1,\ldots,x_m)
=C^{(m)}_\beta\prod_{i<j}|x_i-x_j|^\beta
\E\prod_{j=2}^m\bigl|\xi^{\beta,m\beta/2}(x_j-x_1)\bigr|^\beta,
\end{equation}
where $\xi^{\beta,\delta}$ is the Hua--Pickrell stochastic zeta function. They deduced the leading fusion law
\[
\rho^{(m)}_\beta(x_1,\ldots,x_m)\sim C^{(m)}_\beta\prod_{i<j}|x_i-x_j|^\beta
\]
as the variables merge and explicitly singled out the lower-order terms, already for $m=2$, as an open direction \cite[Corollary 1.9 and the discussion preceding it]{AssiotisNajnudel2026}.

The purpose of this paper is to compute that first lower-order term throughout the regime $m\beta>1$. This is exactly the range in which the Hua--Pickrell trace second moment entering the Ward identity is finite and in which the second-order argument below closes. The result is genuinely all-$\beta$ and all-order in the number $m$ of merging variables: it is not obtained by interpolating the classical values $\beta=1,2,4$ or the even-integer expansions available for the pair correlation. The collision geometry enters only through the quadratic discriminant
\begin{equation}\label{eq:V}
V(a):=\sum_{1\le i<j\le m}(a_i-a_j)^2
=m\sum_{j=1}^m(a_j-\bar a)^2,
\qquad \bar a:=\frac1m\sum_{j=1}^m a_j.
\end{equation}

\begin{theorem}[Second-order fusion asymptotics]\label{thm:main}
Let $\beta>0$, $m\ge2$, and suppose $m\beta>1$. For pairwise distinct $a_1,\ldots,a_m\in\R$, as $\varepsilon\to0$,
\begin{align}\label{eq:main}
\rho^{(m)}_\beta(\varepsilon a_1,\ldots,\varepsilon a_m)
&=C^{(m)}_\beta|\varepsilon|^{\beta\binom m2}\prod_{i<j}|a_i-a_j|^\beta\notag\\
&\quad\times\left[1-\frac{\beta^2}{8(m\beta-1)(2m+1)}V(a)\varepsilon^2+o(\varepsilon^2)\right].
\end{align}
The remainder is locally uniform when $a$ ranges over compact subsets of the set of pairwise distinct configurations.
\end{theorem}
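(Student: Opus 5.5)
The plan is to start from the Assiotis--Najnudel representation \eqref{eq:AN}. Put $x_j=\varepsilon a_j$ and $b_j:=a_j-a_1$, so $b_1=0$. The Vandermonde prefactor gives exactly $|\varepsilon|^{\beta\binom m2}\prod_{i<j}|a_i-a_j|^\beta$. Everything therefore reduces to the second-order expansion of
\[
F(\varepsilon):=\E\prod_{j=2}^m\bigl|\xi(\varepsilon b_j)\bigr|^\beta,\qquad \xi=\xi^{\beta,m\beta/2}.
\]
Write $\xi(z)=\lim\prod_x(1-z/x)$, a principal-value product over the Hua--Pickrell points. Then $\xi(0)=1$, $\xi$ is real on $\R$, and
\[
\log\xi(z)=-zS_1-\tfrac{z^2}{2}S_2+O(z^3),\qquad S_1=\PV\sum_x x^{-1},\quad S_2=\sum_x x^{-2}.
\]
Expanding pathwise, with $B=\sum_j b_j$ and $Q=\sum_j b_j^2$,
\[
\prod_j|\xi(\varepsilon b_j)|^\beta=1-\beta\varepsilon BS_1+\varepsilon^2\Bigl(\tfrac{\beta^2}{2}B^2S_1^2-\tfrac{\beta}{2}QS_2\Bigr)+R_\varepsilon .
\]
The reflection symmetry $x\mapsto -x$ of $\HP_{\beta,\delta}$ for real $\delta$ gives $\E S_1=0$. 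Hence
\[
F(\varepsilon)=1+\varepsilon^2\Bigl(\tfrac{\beta^2}{2}B^2\,\E S_1^2-\tfrac{\beta}{2}Q\,\E S_2\Bigr)+o(\varepsilon^2),
\]
provided $\E S_1^2<\infty$, $\E S_2<\infty$, and $\E R_\varepsilon=o(\varepsilon^2)$. These second moments are exactly where $m\beta>1$ enters.

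The two constants then come from two identities. The first is the rotational Ward identity: apply an infinitesimal rotation to the finite-$N$ circular Jacobi ensemble with the charge-$m\beta/2$ singularity at $1$, then pass to the Hua--Pickrell limit through the Cayley transform. I expect this to yield $\beta\,\E S_1^2=m\,\E S_2$. It is consistent with the translation invariance of $\rho^{(m)}_\beta$: since $b_1=0$ and $m$ variables are present, $V(a)=mQ-B^2$, and the identity turns the $\varepsilon^2$ coefficient into $-\tfrac{\beta}{2m}\E S_2\,V(a)$. The second identity is an exact computation of $\E S_2$ from Hua--Pickrell trace moments. One computes $\E\operatorname{tr}$ of the appropriate squared resolvent at the singular point for finite $N$, via Selberg/Aomoto-type integrals or the known Jacobi moment formulas. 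One then scales and lets $N\to\infty$, obtaining
\[
\E_{\HP_{\beta,m\beta/2}}S_2=\frac{m\beta}{4(m\beta-1)(2m+1)}.
\]
Substituting this gives the coefficient $-\beta^2V(a)/[8(m\beta-1)(2m+1)]$, as claimed in the theorem. The pole at $m\beta=1$ appears here as the divergence of the finite-$N$ moment in the limit.

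The analytic justification comes next. Show that $\sup_{|z|\le r}\E\bigl(|\xi(z)|^{p}+|\xi'(z)|^{p}+|\xi''(z)|^p\bigr)<\infty$ for suitable $p$ slightly above what is needed. Do this by uniform finite-$N$ bounds on the characteristic polynomial of the circular Jacobi ensemble and its derivatives, together with Cauchy estimates on a compact disc. Then use a multivariate expectation--Taylor lemma: if $G(z_2,\ldots,z_m)=\prod_j|\xi(z_j)|^\beta$ is $C^2$ near $0$ with second derivatives whose moments are locally bounded and continuous in mean, then $\E G(\varepsilon b)=\E G(0)+\varepsilon\,\nabla\E G(0)\cdot b+\tfrac{\varepsilon^2}{2}b^{\top}\E\nabla^2G(0)\,b+o(\varepsilon^2)$, uniformly for $b$ in compacts. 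Note that $|\xi|^\beta$ is smooth near $0$ because $\xi(0)=1$, but only on the event that $\xi$ has no zero nearby; the low-probability events where $\xi$ vanishes close to $0$ must be controlled by Hölder's inequality against the moment bounds. Local uniformity in $a$ then follows, since $b$ ranges over a compact set.

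I expect the main obstacle to be this last step near the threshold. When $m\beta$ is only slightly above $1$, $S_1$ and $S_2$ barely have second (respectively first) moments. This is because the point of $\HP_{\beta,m\beta/2}$ nearest to $0$ has density of order $|x|^{m\beta}$, so there is little room for Hölder exponents. Showing that $\E\nabla^2G$ is continuous at $0$ and equals the formal expression above, rather than merely finite, is the delicate part. I would handle it by splitting on the event $\{\min|x|>\eta\}$, where the expansion is uniform, and bounding the complementary contribution by the small-gap probability $O(\eta^{m\beta+1})$ times a moment bound. Choosing $\eta=\eta(\varepsilon)\to0$ suitably should make the remainder $o(\varepsilon^2)$ exactly when $m\beta>1$.
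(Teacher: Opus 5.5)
Your algebraic skeleton is the paper's: the Assiotis--Najnudel representation, the expansion in $S_1,S_2$, $\E S_1=0$, a rotational Ward identity, and $V(a)=mQ-B^2$. The gap is in the analytic step, which you correctly flag as the crux; as proposed it does not close.

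First, the event $\{\min|x|>\eta\}$ does not make the expansion uniform. On that event $\log\xi$ is analytic on $|z|<\eta$, but $S_1$ is still unbounded there and has moments only of order below $m\beta+1$. So for $m\beta\le2$ the cubic remainder of $\exp(-\beta\varepsilon BS_1-\cdots)$ contains $|\varepsilon S_1|^3$, for which no finite expectation is available. A truncation on the \emph{size} of $\varepsilon S_1$ and $\varepsilon^2S_2$ is unavoidable.

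Second, on the complementary event, H\"older against the bounds you propose (circular Jacobi polynomial estimates plus Cauchy estimates) is too weak. These control $\E\prod_j|\xi(z_j)|^{s_j}$ only for total exponent $\sum_js_j\le m\beta$, and derivatives away from the origin only in $L^q$ with $q<m\beta$, so there is no $L^2$ control when $m\beta<2$. Hence $G=\prod_{j\ge2}|\xi(\varepsilon b_j)|^\beta$ is controlled only in $L^r$ with $r\le m/(m-1)$, and $\E[G;\min|x|\le\eta]\lesssim\eta^{(m\beta+1)/m}=\eta^{\beta+1/m}$. The log-expansion needs $\eta\gtrsim\varepsilon$, so this is $o(\varepsilon^2)$ only if $\beta+1/m>2$. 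That excludes, e.g., $m=2$ with $1/2<\beta\le3/2$. Bounding $G-1$ through $\sup_{|z|\le R}|\xi'(z)|\in L^q$, $q<m\beta$, fails similarly as $m\beta\downarrow1$. Moreover, the small-gap bound $O(\eta^{m\beta+1})$ is itself a one-point/Palm estimate for $\HP_{\beta,m\beta/2}$ that is not among your inputs.

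The missing idea is to truncate on the size of the Taylor data rather than on gaps, and to exploit an asymmetry in integrability. $X=\xi'(0)$ is $-\tfrac12$ times the Hua--Pickrell trace, hence in $L^p$ for every $p<m\beta+1$ by Assiotis--Gunes--Soor. By contrast, $Y=\xi''(0)$ and $M=\sup_{|z|\le R}|\xi'''(z)|$ are only in $L^q$, $q<m\beta$. The paper proceeds as follows:
\begin{itemize}
\item It Taylor-expands $\xi$ itself, not $\log\xi$.
\item It applies the deterministic function $F(u)=\prod_j|1+u_j|^\beta$, which is $C^2$ near $0$, on the event $\{|tX|,\,t^2|Y|,\,|t|^3M\le|t|^\theta\}$.
\item On the complement it uses only $F(u)\le C(1+\|u\|^D)$ with $D=(m-1)\beta<m\beta$, together with Markov's inequality.
\end{itemize}
This closes whenever $p>\max\{2,D\}$, $q>\max\{1,D\}$ and $2q(1-1/p)>1$. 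Letting $p\uparrow m\beta+1$ and $q\uparrow m\beta$, the last condition becomes $2(m\beta)^2/(m\beta+1)>1$, i.e.\ exactly $m\beta>1$. Zeros of $\xi$ near the origin never need to be localized.

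Two smaller points. The Ward relation is misstated: it yields $\E S_2=m\beta\,\E S_1^2$ $(=2\delta\,\E S_1^2)$, not $\beta\,\E S_1^2=m\,\E S_2$. Your next sentence, with coefficient $-\frac{\beta}{2m}\E S_2\,V(a)$, in fact uses the correct form.

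Also, the paper does not compute $\E S_2$ directly. It takes $\E S_1^2=\frac14\E X_\beta(m\beta/2)^2$ from the explicit Assiotis--Gunes--Soor second moment and obtains $\E S_2$ from the Ward identity. If you instead compute $\frac{1}{4N^2}\E\sum_j\csc^2(\theta_j/2)$ at finite $N$, you still owe the passage $N\to\infty$, since Fatou gives only an inequality. The paper handles this by writing the quantity as $f_N'(0)^2-f_N''(0)$. It then uses moment convergence of the trace for the first term, and uniform integrability from an $L^s$ bound with $1<s<m\beta$ for the second.
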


\begin{corollary}[Pair fusion]\label{cor:pair}
For every $\beta>1/2$,
\begin{equation}\label{eq:pair}
\rho^{(2)}_\beta(0,x)=C^{(2)}_\beta|x|^\beta\left[1-\frac{\beta^2}{40(2\beta-1)}x^2+o(x^2)\right],\qquad x\to0.
\end{equation}
\end{corollary}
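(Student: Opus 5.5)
The corollary is the case $m=2$ of Theorem~\ref{thm:main}, so the plan is to specialize that theorem and simplify the constants; no new analytic input is needed.

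\textbf{Step 1: check the hypothesis.} For $m=2$ the condition $m\beta>1$ of Theorem~\ref{thm:main} reads $2\beta>1$, that is, $\beta>1/2$. This is exactly the range in Corollary~\ref{cor:pair}.

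\textbf{Step 2: choose the configuration.} Take $a_1=0$ and $a_2=1$, which are distinct, and set $\varepsilon=x$. Then $(\varepsilon a_1,\varepsilon a_2)=(0,x)$. We let $\varepsilon\to0$ through both signs, which is allowed because \eqref{eq:main} is stated with $|\varepsilon|$ and no sign restriction. The prefactor becomes
\[
|\varepsilon|^{\beta\binom22}\,|a_1-a_2|^\beta=|x|^\beta .
\]

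\textbf{Step 3: evaluate the correction.} From \eqref{eq:V}, $V(a)=(0-1)^2=1$. The coefficient in \eqref{eq:main} is then
\[
\frac{\beta^2}{8(2\beta-1)(2\cdot2+1)}=\frac{\beta^2}{40(2\beta-1)} .
\]
Since $\varepsilon^2=x^2$, the remainder $o(\varepsilon^2)$ becomes $o(x^2)$. Substituting these into \eqref{eq:main} gives \eqref{eq:pair}.

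No step here is a real obstacle. The only points to confirm are that negative $x$ is covered, which holds because of the $|\varepsilon|$ in \eqref{eq:main}, and that a single fixed pair $a$ suffices. The latter is true: the locally uniform remainder in Theorem~\ref{thm:main} is not needed when $a$ is held fixed.
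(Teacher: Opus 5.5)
Your proposal is correct and is exactly the paper's argument: it specializes Theorem~\ref{thm:main} to $m=2$, $a_1=0$, $a_2=1$, notes $V(a)=1$, and simplifies $8(2\beta-1)(2\cdot2+1)=40(2\beta-1)$. Your remarks that $m\beta>1$ becomes $\beta>1/2$ and that negative $x$ is covered by the $|\varepsilon|$ in \eqref{eq:main} are also correct; they simply make explicit what the paper leaves implicit.
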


Two features of Theorem~\ref{thm:main} are worth emphasizing. First, the shape dependence of the first correction collapses to the single quadratic discriminant $V(a)$. Second, the coefficient has a simple pole at $m\beta=1$. The proof shows that the same boundary is where the trace second moment used by the Ward identity ceases to be finite; Section~\ref{sec:threshold} explains the corresponding inverse-square heuristic without using it in the proof of the theorem.

The theorem supplies an explicit next-order term in the fusion problem left open after the leading asymptotics of \cite{AssiotisNajnudel2026}. For $m=2$, it extends beyond the classical values and the even-integer power-series calculations of \cite{QuValko2025}; for $m\ge3$ it gives a uniform formula for arbitrary $\beta$ in the full supercritical region $m\beta>1$. In the determinantal case $\beta=2$, the $m=3$ and $m=4$ specializations are independently recovered from direct sine-kernel determinant expansions in Section~\ref{sec:checks}.

\begin{theorem}[An inverse-square Hua--Pickrell moment at fusion]\label{thm:inverse}
Let $\beta>0$, $m\ge2$, and $m\beta>1$. Let $X_{\beta,m\beta/2}$ denote the zero set of $\xi^{\beta,m\beta/2}$, equivalently the $\HP_{\beta,m\beta/2}$ point process. Then
\begin{equation}\label{eq:inverse}
\E\sum_{x\in X_{\beta,m\beta/2}}\frac1{x^2}
=\frac{m\beta}{4(m\beta-1)(2m+1)}.
\end{equation}
\end{theorem}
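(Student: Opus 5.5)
The plan is to realize $X_{\beta,m\beta/2}$ as the scaling limit at the singular point of a finite-$N$ circular Jacobi ensemble, to transport to the real line by the Cayley transform, where $\sum x^{-2}$ becomes essentially a quadratic trace, and to compute that trace exactly by integration-by-parts (Ward/loop) identities.

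\emph{Setup.} Take $\CJ_{\beta,\delta}$ on $N$ points with weight $|1-e^{i\theta}|^{2\delta}$ and $\delta=m\beta/2$. Apply $y=\cot(\theta/2)$. The points $y_1,\ldots,y_N$ then have the Hua--Pickrell (Cauchy) density
\[
\prod_{i<j}|y_i-y_j|^\beta\prod_k(1+y_k^2)^{-s},\qquad 2s=\beta(N-1)+2+2\delta.
\]
In the normalization in which $N\theta_k$ (suitably scaled) converges to $X_{\beta,\delta}$, we have $1/\sin^2(\theta/2)=1+y^2$. Hence $\sum_x x^{-2}$ is the limit of $c\,N^{-2}\sum_k(1+y_k^2)$ for an explicit constant $c$ fixed by the scaling convention. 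The heuristic $x\approx N\theta\approx 2N/y$ gives $c=1/4$, and I will check this against the paper's normalization of $\xi^{\beta,\delta}$.

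\emph{Exact identities.} Write $S_1=\sum y_k$ and $S_2=\sum y_k^2$. Apply $\E\sum_k\partial_k\bigl(f_k\cdot\text{density}\bigr)=0$ twice.

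With $f_k=(1+y_k^2)S_1$, the vector-field term gives $2S_1^2+N+S_2$, the interaction gives $\beta(N-1)S_1^2$, and the weight gives $-2sS_1^2$. Therefore
\[
\E S_1^2=\frac{N+\E S_2}{2\delta}.
\]

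With $f_k=y_k(1+y_k^2)$, the vector-field term gives $N+3S_2$. The interaction gives $\beta\sum_{i<j}(1+y_i^2+y_iy_j+y_j^2)=\beta\bigl[\tbinom N2+(N-1)S_2+\tfrac12(S_1^2-S_2)\bigr]$, and the weight gives $-2sS_2$. Combining the two identities yields
\[
N\Bigl(1+\tfrac{\beta(N-1)}2\Bigr)+\E S_2\Bigl(1-\tfrac\beta2-2\delta+\tfrac{\beta}{4\delta}\Bigr)+\frac{\beta N}{4\delta}=0 .
\]
This gives
\[
\frac{\E S_2}{N^2}\to\frac{\beta/2}{2\delta+\beta/2-1-\beta/(4\delta)} .
\]
For $\delta=m\beta/2$ the denominator factors as $(2m+1)(m\beta-1)/(2m)$, so $\tfrac14\lim N^{-2}\E S_2=\frac{m\beta}{4(m\beta-1)(2m+1)}$, which is \eqref{eq:inverse}. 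The requirement that this denominator be positive is exactly the condition $m\beta>1$. It coincides with finiteness of $\E S_2$, since the density has tail $|y|^{\beta(N-1)-2s}\cdot y^2$, which must be integrable.

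\emph{Passage to the limit (main obstacle).} The convergence of point processes $N\theta\to X_{\beta,\delta}$ gives only vague convergence, while $x^{-2}$ is singular at $0$ and weights the points closest to the singularity. I need three things.
\begin{enumerate}[label=(\roman*)]
\item Vague convergence of the intensity measures restricted to $\{|x|\ge\eta\}$, obtained from the known $\CJ\to\HP$ convergence of correlation functions.
\item Tail control $\sup_N N^{-2}\E\sum_{|y_k|\le\eta N}y_k^2\to0$ as $\eta\to0$. Large $x$ corresponds to bounded $y$, so this term is harmless.
\item Uniform control of the part with $|y_k|\ge KN$, i.e.\ points near the singularity.
\end{enumerate}
For (iii), I would first try to rerun the same Ward identities with $f_k$ truncated at $|y_k|\le KN$, or with a slightly higher power $|y|^{2+\epsilon}$ whose moment stays finite for $m\beta>1$ by the same tail computation. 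This would give uniform integrability of $N^{-2}y^2$ near the singularity. Together, (i)–(iii) justify interchanging the limit with the expectation and identify the finite-$N$ limit with $\E\sum_x x^{-2}$.
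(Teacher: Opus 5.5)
Your finite-$N$ algebra is correct. Both loop equations check and are valid for $\delta>1/2$, and $c=1/4$ is right, since the zeros of $f_N$ are the points $N\theta_k$. Your first identity is the paper's rotational Ward identity \eqref{eq:ward} in Cayley coordinates, because $r_N'(0)=-\frac1{2N}\sum_k y_k$ and $\frac1{4N^2}\sum_k(1+y_k^2)=r_N'(0)^2-r_N''(0)$. Your second identity reproduces the Assiotis--Gunes--Soor second moment \eqref{eq:trace-second} at finite $N$, so you do not need to import that formula.

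\textbf{The gap.} Your item (iii), the limit interchange, carries all the analytic content and is only sketched. Under the coupling $f_N\to\xi$, the quantity $P_N:=\frac1{4N^2}\sum_k(1+y_k^2)=f_N'(0)^2-f_N''(0)\ge0$ converges almost surely to $\xi'(0)^2-\xi''(0)=\sum_x x^{-2}$. Fatou therefore gives only $\E\sum_x x^{-2}\le\frac{m\beta}{4(m\beta-1)(2m+1)}$. Equality is exactly uniform integrability of $P_N$: you must rule out that configurations with a point abnormally close to the singularity keep non-vanishing mass as $N\to\infty$. At the level of points, this is a uniform-in-$N$ inverse-square bound on the one-point intensity near the singularity. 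That is precisely the local-intensity statement the paper deliberately avoids (Section~\ref{sec:threshold}).

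\textbf{Why your suggested fixes do not close it.}
\begin{itemize}
\item The tail computation shows only that $\E\sum_k|y_k|^{2+\epsilon}<\infty$ for each fixed $N$. You need $\sup_N N^{-2-\epsilon}\E\sum_k|y_k|^{2+\epsilon}<\infty$.
\item A loop equation with $f(y)=y|y|^{\epsilon}(1+y^2)$ no longer closes in power sums. After the $O(N)$ cancellation between interaction and weight, the remainder contains cross terms of size $\operatorname{sgn}(y_i)|y_i|^{1+\epsilon}y_j$ for $|y_i|\gg|y_j|$. Bounding these in absolute value costs a factor $\log N$: $\sum_j|y_j|$ is typically of order $N\log N$, and only the signed principal-value sum $\sum_j y_j$ is $O(N)$.
\item Truncating at $|y_k|\le KN$ produces boundary terms involving the one-point density at $|y|=KN$, which is the very quantity you are trying to control.
\item Items (i) and (ii) also need convergence of intensities and a uniform one-point density bound. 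Neither follows from convergence in law alone.
\end{itemize}

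\textbf{The missing idea.} Bypass point-level control entirely, as the paper does. Since $P_N=f_N'(0)^2-f_N''(0)$, two facts suffice:
\begin{itemize}
\item $f_N'(0)^2$ is uniformly integrable. This follows from the AGS convergence of real-order trace moments for $p<2\delta+1$, and some $p>2$ is available exactly when $\delta>1/2$.
\item $\sup_N\E|f_N''(0)|^s<\infty$ for some $1<s<2\delta$. This follows from the complex-plane bound $\E|f_N(z)|^s\le Ce^{s|\operatorname{Im}z|/2}$ via subharmonicity and Cauchy estimates (Lemmas~\ref{lem:complex-pointwise} and~\ref{lem:analytic-moments}).
\end{itemize}
With these two inputs your finite-$N$ identity passes directly to the limit, and (i)--(iii) become unnecessary.
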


For $\beta=2$, $m=2$, \eqref{eq:inverse} gives $1/15$ in the present mean-spacing-$2\pi$ normalization. After unfolding to mean spacing one this becomes $4\pi^2/15$, agreeing with the direct determinantal computation for the confluent two-Palm sine-kernel process.

The main algebraic input is elementary: a rotational Ward identity at finite $N$. What makes the identity useful is that the slope of the normalized characteristic polynomial is exactly the trace variable whose limiting moments were computed by Assiotis--Gunes--Soor \cite{AGS2022}. The analytic input is different from a near-zero Palm truncation: we use the complex-plane moment estimate established in the proof of Assiotis--Najnudel \cite[Proposition 2.10, equations (37)--(38)]{AssiotisNajnudel2026}, their sharp real-axis moment bound \cite[Theorem 2.11]{AssiotisNajnudel2026}, Cauchy estimates for derivatives, and a quantitative expectation--Taylor lemma. This gives the required \emph{joint} second-order expansion throughout $m\beta>1$.

The same threshold suggests a broader phase diagram. We record it only as a conjecture, since the critical and subcritical localization estimates require additional work.
\begin{conjecture}[Fusion transition at $m\beta=1$]\label{conj:transition}
Let $R_{\beta,m}(\varepsilon;a)$ denote the bracketed normalized fusion factor in \eqref{eq:main}. Then the first correction changes type at $m\beta=1$:
\[
R_{\beta,m}(\varepsilon;a)-1\asymp
\begin{cases}
|\varepsilon|^{m\beta+1},&m\beta<1,\\
\varepsilon^2\log(1/|\varepsilon|),&m\beta=1,\\
\varepsilon^2,&m\beta>1.
\end{cases}
\]
The supercritical coefficient is the one in Theorem~\ref{thm:main}.
\end{conjecture}

The heuristic behind Conjecture~\ref{conj:transition} is described in Section~\ref{sec:threshold}. It is an inverse-square integrability transition in the fused Palm environment, rather than a change in the leading Vandermonde repulsion.

\paragraph{Organization.}
Section~\ref{sec:cj} recalls circular Jacobi ensembles and stochastic zeta. Section~\ref{sec:ward} proves the finite-$N$ Ward identity. Section~\ref{sec:slope} imports the Hua--Pickrell trace moments and records compact derivative bounds. Section~\ref{sec:localization} establishes the joint second-order stochastic-zeta expansion. Section~\ref{sec:proof} proves Theorems~\ref{thm:main} and \ref{thm:inverse}. Classical checks at $\beta=2,4$ are given in Section~\ref{sec:checks}. Section~\ref{sec:threshold} discusses the critical threshold and open problems.

\section{Circular Jacobi ensembles and stochastic zeta}\label{sec:cj}
We use the normalization of \cite{AssiotisNajnudel2026,QuValko2025}. Let $\theta_1,\ldots,\theta_N\in[-\pi,\pi)$ have the circular beta ensemble law
\[
\frac1{Z_{N,\beta}}\prod_{1\le j<k\le N}|e^{i\theta_j}-e^{i\theta_k}|^\beta\prod_{j=1}^N\frac{d\theta_j}{2\pi}.
\]
For $\delta>-1/2$, the circular Jacobi ensemble $\CJ_{N,\beta,\delta}$ is obtained by the additional weight
\[
\prod_{j=1}^N|1-e^{i\theta_j}|^{2\delta}.
\]
The Cayley variables
\[
x_j=\cot(\theta_j/2)
\]
produce the finite Hua--Pickrell beta ensemble; see \cite{AGS2022,LiValko2022}.

Let
\[
X_N(z):=\prod_{j=1}^N(z-e^{i\theta_j}),\qquad z\in\mathbb C,
\]
and define, for real $t$ in a neighborhood of zero,
\begin{equation}\label{eq:rN}
r_N(t):=\frac{|X_N(e^{it/N})|}{|X_N(1)|}.
\end{equation}
Almost surely $r_N$ is smooth near zero and $r_N(0)=1$. Direct differentiation gives
\begin{align}
\frac{r_N'(0)}{r_N(0)}&=-\frac1{2N}\sum_{j=1}^N\cot(\theta_j/2),\label{eq:rprime}\\
\frac{r_N''(0)}{r_N(0)}&=r_N'(0)^2-\frac1{4N^2}\sum_{j=1}^N\csc^2(\theta_j/2).\label{eq:rsecond}
\end{align}
Equation~\eqref{eq:rprime} is the microscopic version of the characteristic-polynomial derivative/trace identity used in \cite[Proposition 2.2]{AGS2022}.

Li and Valk\'o \cite{LiValko2022} proved an operator-level limit of $\CJ_{N,\beta,\delta}$ and convergence of the normalized characteristic polynomial to a random entire function. Assiotis and Najnudel identify this limit with the principal-value stochastic zeta function
\begin{equation}\label{eq:xi}
\xi^{\beta,\delta}(z)=\PV\prod_{x\in X_{\beta,\delta}}\left(1-\frac zx\right),
\qquad \xi^{\beta,\delta}(0)=1,
\end{equation}
whose zero set is $\HP_{\beta,\delta}$; see \cite[Propositions 2.7--2.8]{AssiotisNajnudel2026}. Under a suitable coupling the normalized polynomials converge locally uniformly on $\mathbb C$, hence so do all complex derivatives.

For later use set
\begin{equation}\label{eq:h}
h_{\beta,\delta}(t):=\E|\xi^{\beta,\delta}(t)|^\beta,
\qquad t\in\R.
\end{equation}
The sharp moment estimate of \cite[Theorem 2.11 and Corollary 2.12]{AssiotisNajnudel2026} provides the local joint moment control used below. Products of powers whose total exponent is at most $2\delta$ are uniformly bounded in expectation; when the total exponent is strictly below $2\delta$, the exponents may be increased slightly, which gives uniform integrability on compact sets.

\section{A finite-\texorpdfstring{$N$}{N} rotational Ward identity}\label{sec:ward}
The following identity is the algebraic core of the argument.  We state it only in the range in which the differentiated quantities are integrable.
\begin{proposition}[Rotational Ward identity]\label{prop:ward}
Let $\delta>1/2$, $\beta>0$, and let expectation $\E_{N,\delta}$ be with respect to $\CJ_{N,\beta,\delta}$. With $r_N$ as in \eqref{eq:rN},
\begin{equation}\label{eq:ward}
\E_{N,\delta}\left[r_N''(0)+(2\delta-1)r_N'(0)^2\right]=0.
\end{equation}
Consequently, if $h_N(t):=\E_{N,\delta}r_N(t)^\beta$, then
\begin{equation}\label{eq:hNsecond}
h_N''(0)=\beta(\beta-2\delta)\E_{N,\delta}r_N'(0)^2.
\end{equation}
\end{proposition}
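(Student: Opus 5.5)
The plan is to obtain \eqref{eq:ward} by differentiating, twice at $t=0$, an exact rotation-invariance identity at finite $N$. The circular beta ensemble is invariant under $\theta_j\mapsto\theta_j+s$. Hence, for every real $t$,
\[
g(t):=\E_{N,\beta}\bigl|X_N(e^{it/N})\bigr|^{2\delta}=\E_{N,\beta}\prod_{j=1}^N\bigl|e^{it/N}-e^{i\theta_j}\bigr|^{2\delta}
\]
does not depend on $t$, where $\E_{N,\beta}$ is the unweighted circular beta expectation. Since $\CJ_{N,\beta,\delta}$ is the circular beta law reweighted by $|X_N(1)|^{2\delta}$, and $|X_N(e^{it/N})|^{2\delta}=|X_N(1)|^{2\delta}r_N(t)^{2\delta}$, dividing by $g(0)$ gives
\[
\E_{N,\delta}\,r_N(t)^{2\delta}=1\qquad\text{for all }t \text{ near }0 .
\]

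Next I would differentiate this identity twice at $t=0$ and interchange derivative and expectation (justified below). Since $r_N(0)=1$,
\[
\frac{d^2}{dt^2}r_N^{2\delta}\Big|_{t=0}=2\delta\bigl(r_N''(0)+(2\delta-1)r_N'(0)^2\bigr).
\]
Setting the expectation of this to zero and dividing by $2\delta>0$ gives \eqref{eq:ward}.

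The consequence \eqref{eq:hNsecond} follows in the same way. Differentiating $r_N^\beta$ twice gives $\beta\bigl(r_N''(0)+(\beta-1)r_N'(0)^2\bigr)$ at $t=0$. Taking expectations and substituting $\E_{N,\delta}r_N''(0)=-(2\delta-1)\E_{N,\delta}r_N'(0)^2$ from \eqref{eq:ward} yields
\[
h_N''(0)=\beta\bigl((\beta-1)-(2\delta-1)\bigr)\E_{N,\delta}r_N'(0)^2=\beta(\beta-2\delta)\E_{N,\delta}r_N'(0)^2 .
\]

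The main obstacle is justifying differentiation under the expectation; this is where $\delta>1/2$ enters. I would work with the unweighted circular beta integral
\[
t\mapsto \int \prod_{j}\bigl|e^{it/N}-e^{i\theta_j}\bigr|^{2\delta}\,\prod_{j<k}\bigl|e^{i\theta_j}-e^{i\theta_k}\bigr|^\beta\,\prod_j\frac{d\theta_j}{2\pi},
\]
whose $t$-derivatives up to order two are sums of three kinds of terms:
\begin{itemize}[leftmargin=2em]
\item the full weight times a single factor $|e^{it/N}-e^{i\theta_j}|^{-2}$;
\item the full weight times a product of two factors $|e^{it/N}-e^{i\theta_j}|^{-1}$ with $j\ne k$ in the second;
\item smooth bounded multiples of the weight.
\end{itemize}
Here the ``full weight'' is $\prod_l|e^{it/N}-e^{i\theta_l}|^{2\delta}$ times the Vandermonde factor. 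Each singular term is therefore dominated by $|\theta_j-t/N|^{2\delta-2}$ times bounded factors (or by $|\theta_j-t/N|^{2\delta-1}|\theta_k-t/N|^{2\delta-1}$). Since $2\delta-2>-1$, the bound $\int_{-\pi}^{\pi}|\theta-s|^{2\delta-2}\,d\theta\le C$ holds uniformly in $s$. This gives a uniform-in-$t$ integrable domination for $|t|$ small, so dominated convergence legitimizes both differentiations. The same bounds show that $r_N'(0)^2$ and $r_N''(0)$ are $\CJ_{N,\beta,\delta}$-integrable, through \eqref{eq:rprime}--\eqref{eq:rsecond} and $\csc^2(\theta/2)\,|\theta|^{2\delta}\sim|\theta|^{2\delta-2}$.

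The same domination, applied with exponent $\beta$ in place of $2\delta$ on the $r_N$ factor, justifies differentiating $h_N$ twice. To keep the integrand controlled for the $h_N$ computation, I would write $h_N(t)=\E_{N,\beta}\bigl[|X_N(1)|^{2\delta-\beta}|X_N(e^{it/N})|^{\beta}\bigr]/g(0)$. Alternatively, one can note that at $t=0$ only the combination $r_N''+(\beta-1)r_N'^2$ appears, and both of its terms have just been shown to be integrable.
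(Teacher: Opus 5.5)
Your strategy and algebra match the paper's. Rotation invariance gives $\E_{N,\delta}r_N(t)^{2\delta}=1$, two derivatives at $t=0$ give \eqref{eq:ward}, and substitution gives \eqref{eq:hNsecond}. The gap is in the interchange of derivative and integral, which you rightly flag as the crux; as written it does not work.

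\textbf{The interchange for \eqref{eq:ward}.} Write $F(t,\theta)$ for your unweighted integrand. For $1/2<\delta<1$ the second-derivative terms have size $|\theta_j-t/N|^{2\delta-2}$. For any $\eta>0$, $\sup_{|t|\le\eta}|\theta_j-t/N|^{2\delta-2}=+\infty$ on the positive-measure set $\{|\theta_j|\le\eta/N\}$, so there is no $t$-independent integrable majorant. (For $\delta\ge1$ your envelope is fine.) Bounds whose integrals are uniform in $t$ are not a domination, so dominated convergence does not apply. This moving singularity is what the paper means by justifying the step ``without a pointwise envelope.''

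The paper's fix is that $g(\theta)=|1-e^{i\theta}|^{2\delta}\in W^{2,1}(\mathbb T)$. Hence $u\mapsto g(\cdot-u)$ is $C^2$ into $L^1(\mathbb T)$, and the product of translates times the bounded Vandermonde factor is $C^2$ into $L^1(\mathbb T^N)$.

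Your own estimates can be repaired in the same spirit:
\begin{enumerate}[label=(\roman*)]
\item The first derivative has a genuine bounded envelope, since its singular factors are $|\theta_j-t/N|^{2\delta-1}$.
\item For the second derivative, write $\partial_tF(h,\theta)-\partial_tF(0,\theta)=\int_0^h\partial_s^2F(s,\theta)\,ds$ and apply Fubini.
\item The family $\{\partial_s^2F(s,\cdot)\}_{|s|\le\eta}$ is dominated by translates of one fixed $L^1(\mathbb T^N)$ function, hence is uniformly integrable.
\item Vitali's theorem, not dominated convergence, then gives continuity of $s\mapsto\int\partial_s^2F(s,\theta)\,d\theta$ at $s=0$.
\end{enumerate}

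\textbf{The $h_N$ part.} Write $h_N(t)=\E_{N,\beta}\bigl[|X_N(1)|^{2\delta-\beta}|X_N(e^{it/N})|^\beta\bigr]/g(0)$. The singular second-derivative factor is then $|\theta_j-t/N|^{\beta-2}$. For $t\ne0$ the weight $|1-e^{i\theta_j}|^{2\delta-\beta}$ does not vanish at $\theta_j=t/N$. So for $\beta\le1$ the analogue of your bound, $\int|\theta-s|^{\beta-2}\,d\theta\le C$, is false, and ``the same domination'' fails.

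Worse, if $\beta\ge2\delta+1$ then $|X_N(1)|^{2\delta-\beta}$ is not integrable near $\theta_j=0$, and $h_N(t)=+\infty$ for every small $t\ne0$. So \eqref{eq:hNsecond} needs an extra restriction such as $\beta<2\delta+1$; this is automatic in the application $2\delta=m\beta$.

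Your fallback remark, that $r_N''(0)+(\beta-1)r_N'(0)^2$ is integrable, does not license passing the limit of difference quotients through the expectation. The paper only asserts this step formally. Making it rigorous for $\beta\le1$ needs a local scaling argument at $t=0$, not a uniform-in-$t$ bound. That argument should use that the cusp of $|\theta_j-t/N|^\beta$ sits where the $\CJ_{N,\beta,\delta}$ density is only of order $|t/N|^{2\delta}$.
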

\begin{proof}
By rotational invariance of the underlying circular beta ensemble,
\[
u\longmapsto \E_{\mathrm{C}\beta\mathrm E_N}|X_N(e^{iu})|^{2\delta}
\]
is constant. Dividing by its value at $u=0$, changing measure to $\CJ_{N,\beta,\delta}$, and writing $t=Nu$ gives the exact identity
\begin{equation}\label{eq:rotation-exact}
\E_{N,\delta}r_N(t)^{2\delta}=1
\end{equation}
for every real $t$.

For completeness, the differentiation in \eqref{eq:rotation-exact} can be justified without a pointwise envelope for the moving singularity. Put
\[
 g(\theta):=|1-e^{i\theta}|^{2\delta},\qquad \theta\in\mathbb T.
\]
When $\delta>1/2$, one has $g\in W^{2,1}(\mathbb T)$: near the origin, $g''(\theta)=O(|\theta|^{2\delta-2})$, which is integrable. Hence the translation map $u\mapsto g(\cdot-u)$ is $C^2$ as an $L^1(\mathbb T)$-valued map. After the change of measure, the numerator of \eqref{eq:rotation-exact} is, up to its constant normalizing factor,
\[
 \int_{\mathbb T^N}\prod_{j<k}|e^{i\theta_j}-e^{i\theta_k}|^\beta
 \prod_{j=1}^N g(\theta_j-t/N)\,d\boldsymbol\theta.
\]
The Vandermonde factor is bounded on $\mathbb T^N$, while the first two $L^1$ derivatives of the translated product are finite sums of terms containing either one copy of $g''$ or two copies of $g'$, with all remaining factors bounded. Thus the displayed integral is twice differentiable in $t$ and its derivatives are obtained by differentiating under the integral. Differentiating \eqref{eq:rotation-exact} twice at $t=0$ therefore yields
\[
0=2\delta\,\E_{N,\delta}\left[r_N''(0)+(2\delta-1)r_N'(0)^2\right],
\]
which proves \eqref{eq:ward}. Since $r_N(0)=1$,
\[
h_N''(0)=\beta\E r_N''(0)+\beta(\beta-1)\E r_N'(0)^2.
\]
Substitution of \eqref{eq:ward} gives \eqref{eq:hNsecond}.
\end{proof}
\begin{remark}
No large-$N$ asymptotics are used in Proposition~\ref{prop:ward}.  The restriction $\delta>1/2$ is imposed only to justify the twice-differentiated expectation; this is exactly the range used later.
\end{remark}

\section{Trace moments and compact analytic bounds}\label{sec:slope}
We first identify the slope.  Let
\[
\mathfrak q_N(z):=\frac{X_N(z)}{X_N(1)},
\qquad
f_N(z):=e^{-iz/2}\mathfrak q_N(e^{iz/N}).
\]
This is the normalization used in \cite[Definition 2.2 and Proposition 2.7]{AssiotisNajnudel2026}. For real $t$, $f_N(t)$ is real and $f_N(0)=1$; hence, until the first zero is crossed, $f_N(t)=r_N(t)$. In particular,
\begin{equation}\label{eq:f-r-derivatives}
f_N'(0)=r_N'(0),\qquad f_N''(0)=r_N''(0).
\end{equation}
Assiotis--Najnudel construct a coupling in which $f_N\to\xi^{\beta,\delta}$ almost surely, locally uniformly on $\mathbb C$ \cite[Propositions 2.7--2.8]{AssiotisNajnudel2026}; therefore every fixed complex derivative converges almost surely as well.

Assiotis--Gunes--Soor \cite{AGS2022} study the Hua--Pickrell trace variable $X_\beta(\tau)$ obtained as the limit of the normalized sum of Cayley coordinates. Their explicit second moment is
\begin{equation}\label{eq:trace-second}
\E X_\beta(\tau)^2=\frac{\beta}{(2\tau-1)(4\tau+\beta)},
\qquad \tau>\frac12.
\end{equation}
More generally, Proposition 3.8 of \cite{AGS2022} gives convergence of absolute moments of every real order $0\le t\le r$ once the one-row variable has an $r$th moment, and Proposition 3.11 specializes this to the Hua--Pickrell trace: for every real $h$ with $0\le h<\Re(\tau)+1/2$, the absolute moment of order $p=2h$ is finite and the corresponding finite-$N$ moments converge. Thus all real orders $0\le p<2\Re(\tau)+1$ are covered; no integrality of $p$ is required.

\begin{proposition}[Slope moments]\label{prop:slope}
Let $\delta>1/2$. Then, for every $0<p<2\delta+1$,
\begin{equation}\label{eq:slope-p}
\E|\xi^{\beta,\delta\,\prime}(0)|^p<\infty,
\end{equation}
and for $p=2$,
\begin{equation}\label{eq:slope-moment}
\lim_{N\to\infty}\E_{N,\delta}r_N'(0)^2
=\E\bigl[\xi^{\beta,\delta\,\prime}(0)^2\bigr]
=\frac{\beta}{4(2\delta-1)(4\delta+\beta)}.
\end{equation}
\end{proposition}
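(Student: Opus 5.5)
The plan is to identify the slope $r_N'(0)$ with the normalized Hua--Pickrell trace variable of \cite{AGS2022}, and then pass to the limit using the Assiotis--Najnudel coupling. Uniform integrability will come from the real-order moment convergence of \cite[Propositions 3.8 and 3.11]{AGS2022}.

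\emph{Step 1: exact identification at finite $N$.} By \eqref{eq:rprime} and $r_N(0)=1$,
\[
r_N'(0)=-\tfrac12 T_N,\qquad T_N:=\frac1N\sum_{j=1}^N\cot(\theta_j/2)=\frac1N\sum_{j=1}^N x_j .
\]
Here the $x_j$ are the Cayley coordinates of $\CJ_{N,\beta,\delta}$, that is, the finite Hua--Pickrell ensemble. I will check that the weight $|1-e^{i\theta}|^{2\delta}$ corresponds to the AGS Hua--Pickrell parameter $\tau=\delta$ (real). This is consistent with \eqref{eq:trace-second}: its pole at $\tau=1/2$ matches the threshold $\delta>1/2$ in Proposition~\ref{prop:ward}. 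I will also check that $T_N$, with its $1/N$ scaling, is exactly the normalized trace whose law converges to $X_\beta(\delta)$. Under these identifications, Proposition 3.11 of \cite{AGS2022} gives the following for every real $0\le p<2\delta+1$:
\[
\sup_N\E_{N,\delta}|T_N|^p<\infty,\qquad \E_{N,\delta}|T_N|^p\to\E|X_\beta(\delta)|^p<\infty .
\]

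\emph{Step 2: identification of the limit.} By \eqref{eq:f-r-derivatives}, $r_N'(0)=f_N'(0)$. In the coupling of \cite[Propositions 2.7--2.8]{AssiotisNajnudel2026}, $f_N\to\xi^{\beta,\delta}$ locally uniformly on $\mathbb C$. Cauchy's formula then gives $f_N'(0)\to\xi^{\beta,\delta\,\prime}(0)$ almost surely. This limit is real, because $f_N$ is real on $\R$. Marginal laws do not depend on the coupling, so $r_N'(0)\Rightarrow\xi^{\beta,\delta\,\prime}(0)$. Since also $r_N'(0)=-T_N/2\Rightarrow -X_\beta(\delta)/2$, we obtain
\[
\xi^{\beta,\delta\,\prime}(0)\overset{d}{=}-\tfrac12X_\beta(\delta).
\]
Then \eqref{eq:slope-p} follows at once from Step 1; alternatively it follows from Fatou's lemma applied to the a.s.\ convergence together with the uniform bound.

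\emph{Step 3: the second moment.} Since $\delta>1/2$, we have $2<2\delta+1$, so we may fix $p'\in(2,2\delta+1)$. The bound $\sup_N\E|r_N'(0)|^{p'}<\infty$ makes $\{r_N'(0)^2\}$ uniformly integrable. Combined with the a.s.\ convergence from Step 2, this gives
\[
\E_{N,\delta}r_N'(0)^2\to\E\bigl[\xi^{\beta,\delta\,\prime}(0)^2\bigr].
\]
This limit equals $\tfrac14\E X_\beta(\delta)^2=\beta/[4(2\delta-1)(4\delta+\beta)]$ by \eqref{eq:trace-second}.

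\emph{Main obstacle.} I expect the real difficulty to be bookkeeping rather than analysis: matching conventions between \cite{AGS2022} and the present circular Jacobi normalization. This means confirming $\tau=\delta$ and not, say, $\tau=\delta/2$ or a $\beta$-dependent shift. It also means confirming the $1/N$ trace scaling in the definition of $X_\beta(\tau)$, and that \eqref{eq:trace-second} refers to the same centred variable with no extra constant factor. I would first cross-check at $\beta=2$, where the second moment of $T_N$ can be computed directly from the determinantal Hua--Pickrell kernel. Once these conventions are fixed, the sign and the factor $1/2$ in \eqref{eq:rprime} are harmless, since only the second moment enters.
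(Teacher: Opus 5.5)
Your proposal is correct and follows essentially the same route as the paper. It identifies $r_N'(0)=f_N'(0)=-\frac{1}{2N}\sum_j x_j$ with the normalized Hua--Pickrell trace at parameter $\tau=\delta$, invokes the real-order moment convergence of \cite[Propositions 3.8 and 3.11]{AGS2022}, and uses the Assiotis--Najnudel coupling to identify $\xi^{\beta,\delta\,\prime}(0)$ in law with $-X_\beta(\delta)/2$. The only cosmetic difference is that you pass the second moment to the limit via uniform integrability from a bound with exponent $p'\in(2,2\delta+1)$, where the paper reads it off directly from the AGS moment convergence; your bookkeeping concern about $\tau=\delta$ is consistent with the $\beta=2,4$ checks in Section~\ref{sec:checks}.
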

\begin{proof}
By \eqref{eq:rprime} and \eqref{eq:f-r-derivatives},
\[
f_N'(0)=r_N'(0)=-\frac1{2N}\sum_{j=1}^N x_j,
\qquad x_j=\cot(\theta_j/2).
\]
Under $\CJ_{N,\beta,\delta}$, $(x_1,\ldots,x_N)$ has the finite Hua--Pickrell law with parameter $\delta$. By \cite[Proposition 3.8]{AGS2022}, moment convergence holds for every real exponent up to any available one-row moment; its Hua--Pickrell specialization \cite[Proposition 3.11]{AGS2022}, with $p=2h<2\delta+1$, gives exactly the asserted real (not necessarily integer) moment range and convergence of the normalized-trace moments. On the other hand, the coupling of \cite[Proposition 2.7]{AssiotisNajnudel2026} gives $f_N'(0)\to\xi'(0)$ almost surely. Thus the limiting derivative has the same law as $-X_\beta(\delta)/2$, which proves \eqref{eq:slope-p}. For $p=2$, substituting \eqref{eq:trace-second} gives \eqref{eq:slope-moment}. Notice that no claim that bounded second moments alone imply uniform integrability is needed here; the identification uses the already established trace limit and its moment convergence.
\end{proof}

The next two lemmas isolate the precise complex-plane input and then convert it into derivative control.
\begin{lemma}[Complex-plane pointwise moments]\label{lem:complex-pointwise}
Let $\beta,\delta>0$ and $0<s\le2\delta$. There is a constant $C=C(\beta,\delta,s)$ such that, for every $N\ge1$ and every $z=x+iy\in\mathbb C$,
\begin{equation}\label{eq:complex-global}
\E|f_N(z)|^s\le C e^{s|y|/2}.
\end{equation}
Moreover,
\begin{equation}\label{eq:complex-limit-global}
\E|\xi^{\beta,\delta}(z)|^s\le C e^{s|z|/2}.
\end{equation}
In particular, both expectations are uniformly bounded when $z$ ranges over a fixed compact set.
\end{lemma}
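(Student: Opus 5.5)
The plan is to bound $|f_N(z)|^s$ pointwise by an average of $|f_N|^s$ over the real axis, using harmonic extension and Jensen's inequality. The average is then controlled by the exact rotation identity \eqref{eq:rotation-exact} together with H\"older's inequality. We will not need the full strength of \cite[Theorem 2.11]{AssiotisNajnudel2026}, and the argument should in fact give $C=1$.

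\textbf{Step 1: reduce to a point inside the circle.} Write $z=x+iy$ and $w=e^{iz/N}=\rho e^{i\varphi}$, with $\rho=e^{-y/N}$ and $\varphi=x/N$. Then
\[
|f_N(z)|=e^{y/2}\frac{|X_N(w)|}{|X_N(1)|}.
\]
\begin{itemize}
\item If $y\ge0$, then $\rho\le1$ and we work with $w$ directly.
\item If $y<0$, then $\rho>1$. Since $|\rho e^{i\varphi}-e^{i\theta_j}|=\rho\,|e^{i\varphi}-\rho^{-1}e^{i\theta_j}|=\rho\,|\rho^{-1}e^{i\varphi}-e^{i\theta_j}|$, we get $|X_N(w)|=\rho^N|X_N(\rho^{-1}e^{i\varphi})|$. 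Here $\rho^N=e^{-y}=e^{|y|}$, and combined with $e^{y/2}$ this leaves the net factor $e^{|y|/2}$.
\end{itemize}
In both cases we obtain
\[
|f_N(z)|=e^{|y|/2}\,\frac{|X_N(\rho_*e^{i\varphi})|}{|X_N(1)|},\qquad \rho_*=e^{-|y|/N}\le1 .
\]

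\textbf{Step 2: Poisson representation and Jensen.} For $\rho_*<1$, the function $\log|\rho_* e^{i\varphi}-e^{i\theta}|$ is the Poisson integral of its boundary values $\log|e^{i\psi}-e^{i\theta}|$. This follows from the expansion $\log|1-\rho e^{i\alpha}|=-\Re\sum_{k\ge1}\rho^k e^{ik\alpha}/k$; equivalently, $\log|X_N|$ is harmonic in the disc with integrable boundary singularities. Hence
\[
\log|X_N(\rho_*e^{i\varphi})|=\int_{\mathbb T}P_{\rho_*}(\varphi-\psi)\log|X_N(e^{i\psi})|\,\frac{d\psi}{2\pi}.
\]
Exponentiating with the factor $s$ and applying Jensen's inequality for the probability measure $P_{\rho_*}\,d\psi/2\pi$ gives
\[
|X_N(\rho_*e^{i\varphi})|^s\le\int_{\mathbb T}P_{\rho_*}(\varphi-\psi)\,|X_N(e^{i\psi})|^s\,\frac{d\psi}{2\pi}.
\]
Now divide by $|X_N(1)|^s$, take $\E_{N,\delta}$, and use Tonelli. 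Writing $|X_N(e^{i\psi})|/|X_N(1)|=r_N(N\psi)$, this yields
\[
\E|f_N(z)|^s\le e^{s|y|/2}\sup_{t\in\R}\E_{N,\delta}\,r_N(t)^s.
\]
The case $\rho_*=1$ (that is, $y=0$) is immediate without this step.

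\textbf{Step 3: uniform bound on the real axis.} Since $0<s\le2\delta$, H\"older's inequality with exponent $2\delta/s$ and the exact identity \eqref{eq:rotation-exact}, $\E_{N,\delta}r_N(t)^{2\delta}=1$, give $\E_{N,\delta}r_N(t)^s\le1$ for all real $t$ and all $N$. This proves \eqref{eq:complex-global} with $C=1$.

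One point needs care here. Identity \eqref{eq:rotation-exact} was derived by a change of measure from rotation invariance, which is valid for every $\delta>-1/2$. Only its differentiation used $\delta>1/2$, so the identity itself is available for all $\delta>0$, as the lemma requires.

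\textbf{Step 4: pass to the limit.} Under the coupling of \cite[Propositions 2.7--2.8]{AssiotisNajnudel2026}, $f_N(z)\to\xi^{\beta,\delta}(z)$ almost surely for each fixed $z$. Fatou's lemma then gives
\[
\E|\xi^{\beta,\delta}(z)|^s\le e^{s|y|/2}\le e^{s|z|/2},
\]
which is \eqref{eq:complex-limit-global}. Uniform boundedness on compact sets is then immediate.

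\textbf{Main obstacle.} The step needing the most care is Step 2, namely justifying the Poisson/Jensen inequality with boundary singularities. If the harmonic-extension argument proves awkward, the first fallback is to apply it to $\rho_*<\rho'<1$ and let $\rho'\uparrow1$ using Fatou. A second option is to use subharmonicity of $|X_N|^s$ in the disc directly, which gives the same Poisson majorant.
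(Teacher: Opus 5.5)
Your proof is correct. Its skeleton is the one the paper imports from the proof of \cite[Proposition 2.10]{AssiotisNajnudel2026}: reflect across the unit circle when $y<0$ (the functional equation following their (38)), bound the value at radius $e^{-|y|/N}$ by a Poisson average over the circle, and pass to $\xi^{\beta,\delta}$ by Fatou.

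The genuine difference is the bound you use on the unit circle. The paper appeals to the sharp moment bound \cite[Theorem 2.11]{AssiotisNajnudel2026}, which controls products of powers of $r_N$ at several points with total exponent up to $2\delta$. You observe that only a single point is needed here. The exact rotation identity \eqref{eq:rotation-exact} holds for every $\delta>0$, since the hypothesis $\delta>1/2$ in Proposition~\ref{prop:ward} is used only for differentiation. Together with Lyapunov's inequality, it gives $\sup_{N\ge1,\,t\in\R}\E_{N,\delta}r_N(t)^s\le1$ for $0<s\le2\delta$. Your route is therefore self-contained, does not depend on the internal details of the cited proof, and yields the explicit constant $C=1$. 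The multi-point strength of Theorem 2.11 is simply not needed for this one-point lemma.

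The step you flag as the main obstacle is not one. $X_N$ is a polynomial, so $|X_N|^s$ is continuous on the closed disc and subharmonic inside, which gives the Poisson majorant immediately. Your Fourier-series verification of the Poisson representation of $\log|\rho e^{i\varphi}-e^{i\theta}|$ is also already a complete justification.
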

\begin{proof}
This is the complex-plane estimate proved in the course of \cite[Proposition 2.10]{AssiotisNajnudel2026}. For $z=x+iy$, equation (37) there relates the limiting entire function to the normalized circular-Jacobi polynomial at $e^{-y/N}e^{ix/N}$. Equation (38), using the Poisson kernel and \cite[Theorem 2.11]{AssiotisNajnudel2026}, bounds the latter uniformly when the radius is at most one; the functional equation displayed immediately after (38) gives the corresponding bound outside the unit disk. Since $|e^{-iz/2}|=e^{y/2}$, these estimates give \eqref{eq:complex-global}. Passing to the coupled limit by Fatou's lemma gives \eqref{eq:complex-limit-global}; the proof of \cite[Proposition 2.10]{AssiotisNajnudel2026} records the latter explicitly in the form $\E|\xi(z)|^s\le C e^{s|z|/2}$.
\end{proof}

\begin{lemma}[Compact moments of derivatives]\label{lem:analytic-moments}
Let $\beta,\delta>0$, let $0<s<2\delta$, let $R>0$, and let $k\ge0$ be an integer. Then
\begin{equation}\label{eq:derivative-moment-bound}
\sup_{N\ge1}\E\sup_{|z|\le R}|f_N^{(k)}(z)|^s<\infty,
\qquad
\E\sup_{|z|\le R}|\xi^{\beta,\delta\,(k)}(z)|^s<\infty.
\end{equation}
Consequently, for $s>1$ the families $\{f_N^{(k)}(0)\}_N$ are uniformly integrable.
\end{lemma}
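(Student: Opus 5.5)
The plan is to pass from pointwise moments (Lemma~\ref{lem:complex-pointwise}) to moments of the supremum over a disk by subharmonicity, and then to control derivatives by Cauchy's formula. Fix $R>0$ and set $R'=2R+1$. Since $f_N$ is entire, $|f_N|^s$ is subharmonic for every $s>0$. For $|z|\le R$ the disk $D(z,R+1)$ lies in $D(0,R')$, so the sub-mean-value property over area gives
\[
|f_N(z)|^s\le \frac{1}{\pi(R+1)^2}\int_{D(z,R+1)}|f_N(w)|^s\,dA(w)\le \frac{1}{\pi(R+1)^2}\int_{|w|\le R'}|f_N(w)|^s\,dA(w).
\]
The right-hand side does not depend on $z$, so it also bounds $\sup_{|z|\le R}|f_N(z)|^s$. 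We take expectations and use Tonelli together with \eqref{eq:complex-global}, which gives $\E|f_N(w)|^s\le Ce^{sR'/2}$ uniformly in $N$ and in $|w|\le R'$. This yields $\sup_N\E\sup_{|z|\le R}|f_N(z)|^s<\infty$, and the case $k=0$ follows. For $\xi^{\beta,\delta}$ the same argument applies with \eqref{eq:complex-limit-global}. Alternatively one can use Fatou's lemma along the almost sure locally uniform coupling, since $\sup_{|z|\le R}|f_N|^s\to\sup_{|z|\le R}|\xi|^s$ almost surely.

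For $k\ge1$ we use Cauchy's estimate on circles of radius $1$. For $|z|\le R$,
\[
|f_N^{(k)}(z)|\le k!\,\sup_{|w-z|=1}|f_N(w)|\le k!\,\sup_{|w|\le R+1}|f_N(w)|.
\]
Raising this to the power $s$ and applying the $k=0$ bound with $R+1$ in place of $R$ gives \eqref{eq:derivative-moment-bound} for $f_N^{(k)}$. The limit function $\xi^{\beta,\delta\,(k)}$ is handled the same way.

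For uniform integrability, let $s>1$ with $s<2\delta$. Then $\sup_N\E|f_N^{(k)}(0)|^s<\infty$ by the bound just proved with any $R$. A family that is bounded in $L^s$ with $s>1$ is uniformly integrable, by de la Vallée-Poussin or by Markov and H\"older: $\E[|Y|\mathbf 1_{|Y|>M}]\le \E|Y|^s/M^{s-1}$.

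The only delicate point is whether the hypotheses allow every $s$ in $(0,2\delta)$. Lemma~\ref{lem:complex-pointwise} holds for all $0<s\le 2\delta$, and subharmonicity of $|f|^s$ holds for every $s>0$ because $\log|f|$ is subharmonic. So no restriction such as $s\ge1$ is needed. The strict inequality $s<2\delta$ in the statement is more than the argument requires, but it is harmless. If one wants to avoid invoking subharmonicity for small $s$, one can first treat $s\in[1,2\delta)$, where a convexity argument works, and then obtain smaller $s$ from H\"older's inequality, provided $2\delta\ge1$. Since $2\delta$ could be below $1$ in general, the subharmonic route is the one to use.
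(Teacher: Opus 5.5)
Your proposal is correct and follows essentially the same route as the paper: subharmonicity of $|f|^s$ for every $s>0$, combined with the pointwise complex-plane moments of Lemma~\ref{lem:complex-pointwise}, controls the supremum on $|z|\le R$; Cauchy estimates then handle the derivatives, and $L^s$-boundedness with $s>1$ gives uniform integrability. The differences are cosmetic: you use the area sub-mean value over $D(z,R+1)\subset D(0,2R+1)$ where the paper uses the Poisson bound on the circle $|z|=R+1$, and you take Cauchy radius $1$ where the paper takes $1/2$.
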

\begin{proof}
For an entire function $g$, $|g|^s$ is subharmonic. Applying the sub-mean/Poisson inequality on a disk of radius $R+1$ bounds $\sup_{|z|\le R}|g(z)|^s$ by a constant times the boundary integral of $|g|^s$ on $|z|=R+1$. Taking expectations and using Lemma~\ref{lem:complex-pointwise} gives the case $k=0$ of \eqref{eq:derivative-moment-bound}, uniformly in $N$ and also for $\xi$. Cauchy's estimate on the concentric disks $R$ and $R+1/2$ then yields the same conclusion for every fixed derivative order $k$. The final assertion follows from the $L^s$ bound when $s>1$.
\end{proof}

\section{Joint second-order expansion of the stochastic-zeta factor}\label{sec:localization}
Let $X_{\beta,\delta}$ denote the zero set of $\xi^{\beta,\delta}$.  Proposition 2.8 of \cite{AssiotisNajnudel2026} identifies $\xi$ with its principal-value product and, through the canonical-product representation used in that proof, gives almost surely
\[
\sum_{x\in X_{\beta,\delta}}\frac1{x^2}<\infty.
\]
Thus, with
\begin{equation}\label{eq:S}
S_1:=\PV\sum_{x\in X_{\beta,\delta}}\frac1x,
\qquad
S_2:=\sum_{x\in X_{\beta,\delta}}\frac1{x^2},
\end{equation}
one has pathwise
\begin{equation}\label{eq:derivatives}
\xi'(0)=-S_1,
\qquad
\xi''(0)=S_1^2-S_2.
\end{equation}

We first determine the expectations in \eqref{eq:derivatives} without any Palm truncation.
\begin{proposition}[Inverse moments from the Ward limit]\label{prop:moments}
Let $m\ge2$, $\beta>0$, $m\beta>1$, and put $\delta=m\beta/2$. Then
\begin{align}
\E S_1^2&=\frac{1}{4(m\beta-1)(2m+1)},\label{eq:S1value}\\
\E S_2&=\frac{m\beta}{4(m\beta-1)(2m+1)}.\label{eq:S2value}
\end{align}
Moreover,
\begin{equation}\label{eq:ward-limit}
\E\xi''(0)=-(2\delta-1)\E\xi'(0)^2.
\end{equation}
\end{proposition}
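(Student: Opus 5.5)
With $\delta=m\beta/2>1/2$, the plan is to pass to the limit $N\to\infty$ in the finite-$N$ Ward identity \eqref{eq:ward}. That gives \eqref{eq:ward-limit} directly, and the pathwise identities \eqref{eq:derivatives} then convert it into the two inverse moments.

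\emph{Step 1: value of $\E S_1^2$.} By \eqref{eq:derivatives}, $S_1=-\xi'(0)$, so $\E S_1^2=\E\xi'(0)^2$. Proposition~\ref{prop:slope} evaluates this at $\delta=m\beta/2$. Then $4\delta+\beta=(2m+1)\beta$ and $2\delta-1=m\beta-1$, so the value is
\[
\frac{\beta}{4(m\beta-1)(2m+1)\beta}=\frac1{4(m\beta-1)(2m+1)},
\]
which is \eqref{eq:S1value}.

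\emph{Step 2: the Ward limit \eqref{eq:ward-limit}.} By \eqref{eq:f-r-derivatives}, \eqref{eq:ward} reads $\E f_N''(0)=-(2\delta-1)\E f_N'(0)^2$.
\begin{itemize}
\item On the right-hand side, Proposition~\ref{prop:slope} gives $\E f_N'(0)^2\to\E\xi'(0)^2$.
\item On the left-hand side, $f_N''(0)\to\xi''(0)$ almost surely under the Assiotis--Najnudel coupling. Since $2\delta=m\beta>1$, we can pick $s\in(1,2\delta)$. Lemma~\ref{lem:analytic-moments} with $k=2$ then gives uniform integrability of $\{f_N''(0)\}_N$, so $\E f_N''(0)\to\E\xi''(0)$.
\end{itemize}
This is the one place where $m\beta>1$ is used beyond Proposition~\ref{prop:slope}. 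The main thing to check is that the lemma applies with $s$ strictly between $1$ and $2\delta$, and that interval is nonempty exactly when $m\beta>1$.

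\emph{Step 3: value of $\E S_2$.} From \eqref{eq:derivatives}, $S_2=S_1^2-\xi''(0)$ pathwise. Both terms on the right are integrable: $S_1^2$ by Step 1, and $\xi''(0)$ by Lemma~\ref{lem:analytic-moments}. Note also $S_2\ge0$. Taking expectations and using \eqref{eq:ward-limit},
\[
\E S_2=\E\xi'(0)^2+(2\delta-1)\E\xi'(0)^2=2\delta\,\E S_1^2=m\beta\,\E S_1^2 .
\]
Substituting \eqref{eq:S1value} gives \eqref{eq:S2value}. No Palm truncation or localization near zero is needed, since all convergence is handled through the derivatives at the origin.
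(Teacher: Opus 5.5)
Your proposal is correct and follows the paper's proof essentially step for step. You obtain $\E S_1^2$ from Proposition~\ref{prop:slope}, pass the Ward identity to the limit using almost-sure convergence plus uniform integrability of $f_N''(0)$ (Lemma~\ref{lem:analytic-moments} with $s\in(1,2\delta)$) together with second-moment convergence of $f_N'(0)$, and then deduce $\E S_2=2\delta\,\E S_1^2$ from the pathwise identity $S_2=S_1^2-\xi''(0)$.
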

\begin{proof}
The first formula follows immediately from Proposition~\ref{prop:slope}: since $2\delta=m\beta$,
\[
\E S_1^2=\E\xi'(0)^2
=\frac{\beta}{4(2\delta-1)(4\delta+\beta)}
=\frac1{4(m\beta-1)(2m+1)}.
\]
Choose $s$ with $1<s<2\delta$, which is possible because $m\beta=2\delta>1$. By Lemma~\ref{lem:analytic-moments}, $f_N''(0)\to\xi''(0)$ almost surely and $\{f_N''(0)\}$ is uniformly integrable. Therefore
\[
\E f_N''(0)\longrightarrow\E\xi''(0).
\]
Proposition~\ref{prop:slope} gives convergence of the second moments of $f_N'(0)=r_N'(0)$. Passing \eqref{eq:ward} to the limit using \eqref{eq:f-r-derivatives} proves \eqref{eq:ward-limit}.

By Lemma~\ref{lem:analytic-moments}, $\xi''(0)\in L^1$, while $S_1^2\in L^1$ by Proposition~\ref{prop:slope}. Equation \eqref{eq:derivatives} therefore implies $S_2=S_1^2-\xi''(0)\in L^1$. Taking expectations and using \eqref{eq:ward-limit} gives
\[
\E S_2
=\E S_1^2-\E\xi''(0)
=2\delta\,\E S_1^2,
\]
which is exactly \eqref{eq:S2value}.
\end{proof}

\begin{lemma}[Joint stochastic-zeta Taylor expansion]\label{lem:joint}
Let $m\ge2$, $\beta>0$, $m\beta>1$, set $\delta=m\beta/2$ and $d=m-1$, and write $\xi=\xi^{\beta,\delta}$. For $b=(b_1,\ldots,b_d)\in\R^d$ define
\[
H_b(t):=\E\prod_{j=1}^d|\xi(tb_j)|^\beta,
\qquad
A(b):=\sum_{j=1}^d b_j,
\qquad
B(b):=\sum_{j=1}^d b_j^2.
\]
Then, uniformly for $b$ in compact subsets of $\R^d$,
\begin{equation}\label{eq:joint-expansion}
H_b(t)=1-\beta tA(b)\E S_1
+\frac{t^2}{2}\left[\beta^2A(b)^2\E S_1^2-\beta B(b)\E S_2\right]+o(t^2).
\end{equation}
For real $\delta$ the zero process is reflection symmetric, hence $\E S_1=0$.
\end{lemma}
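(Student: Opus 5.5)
The plan is a pathwise second-order Taylor expansion of the random function $G(t):=\prod_{j=1}^d|\xi(tb_j)|^\beta$, followed by passing the expectation through the expansion with a uniform-integrability argument. Since $\xi$ is real on $\R$ with $\xi(0)=1$, there is a random $\tau>0$ such that $\xi>0$ on $(-\tau,\tau)$. On this interval $\log\xi$ is smooth, and
\[
\log\xi(u)=\xi'(0)u+\tfrac12\bigl(\xi''(0)-\xi'(0)^2\bigr)u^2+O(u^3)=-S_1u-\tfrac12S_2u^2+O(u^3),
\]
where the second equality uses \eqref{eq:derivatives}. Hence, for $|t|\,|b|_\infty<\tau$,
\[
\log G(t)=-\beta tA(b)S_1-\tfrac\beta2 t^2B(b)S_2+O(t^3),
\]
and therefore
\[
G(t)=1-\beta tAS_1+\tfrac{t^2}{2}\bigl[\beta^2A^2S_1^2-\beta BS_2\bigr]+o(t^2)
\]
pathwise. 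Taking expectations term by term gives \eqref{eq:joint-expansion}. For the symmetry claim, $\CJ_{N,\beta,\delta}$ with real $\delta$ is invariant under $\theta\mapsto-\theta$, so $X_{\beta,\delta}\overset{d}{=}-X_{\beta,\delta}$. Because $S_1\in L^2$ by Proposition~\ref{prop:slope}, this gives $\E S_1=0$.

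The real work is the quantitative expectation--Taylor step: we need $\E\bigl|G(t)-T_2(t)\bigr|=o(t^2)$, uniformly in $b$ on compacts, where $T_2$ is the quadratic polynomial above. I would avoid logarithms and work directly with the smooth real function $F(t):=\prod_j\xi(tb_j)$, which satisfies $G=|F|^\beta$. The function $F$ is the restriction of an entire function, and its derivatives on $[-1,1]$ are controlled by $M:=\sup_{|z|\le R}\sum_{k\le 3}|\xi^{(k)}(z)|$ with $R=\sup|b|$. By Lemma~\ref{lem:analytic-moments} and Hölder's inequality, $\E M^{s}<\infty$ for every $s<2\delta/d$. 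That range is too small to be useful directly, so the plan is instead to expand each factor and control the mixed products jointly. The joint moment input is the sharp bound recalled after \eqref{eq:h}: $\E\prod_j|\xi(z_j)|^{s_j}$ is bounded when $\sum s_j\le 2\delta=m\beta$. The total exponent of $G$ is $d\beta=(m-1)\beta<m\beta$, which leaves a slack of $\beta$.

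I would use that slack as follows. Write $|\xi(u)|^\beta=1+\beta\xi'(0)u+R_1(u)$ and, at second order, $|\xi(u)|^\beta=1+\beta\xi'(0)u+\tfrac{u^2}{2}\phi_2+R_2(u)$ on the event $E_t=\{\tau>R|t|\}$. Here $\phi_2=\beta\xi''(0)+\beta(\beta-1)\xi'(0)^2$, and $|R_2(u)|\le C|u|^2\omega(|u|)$, where $\omega$ is a random modulus built from $\sup_{|z|\le R}|\xi^{(k)}|$ for $k\le3$ and $\sup_{|u|\le R|t|}|\xi(u)|^{\beta-2}$. The inverse power is harmless on $E_t$ for $t$ small, since $\xi\ge1/2$ there once $|t|$ is below a random threshold. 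Multiplying the $d$ expansions gives $G-T_2$ as a sum of products. Each product contains at most two derivative factors, from $\xi'(0)$, $\xi''(0)$, or the Cauchy-controlled sup of $\xi'''$, together with at most $d-1$ undifferentiated factors $|\xi(tb_k)|^\beta$. Every such term is $o(t^2)$ pathwise. To justify dominated convergence I would apply Hölder's inequality with exponent $s>1$ close to $1$ to each term, using the derivative bounds of Lemma~\ref{lem:analytic-moments} (available since $2\delta>1$) and the joint bound for the remaining factors. On $E_t^c$ I would bound $\E[|G|+|T_2|;E_t^c]$ by the same Hölder split times $\mathbb P(E_t^c)^{1-1/s}$. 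Since $\tau^{-1}\lesssim|\xi'|$-type quantities, $\mathbb P(E_t^c)=\mathbb P(\tau\le R|t|)$ is $O(|t|^{p})$ for some $p$ by Markov's inequality and Proposition~\ref{prop:slope}. I expect this to be enough, given the choice $p$ close to $2\delta+1>2$.

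The main obstacle is the exponent bookkeeping in this Hölder step. Products like $|\xi'(0)|^2\prod_{k}|\xi(tb_k)|^\beta$ must be shown integrable uniformly in $t$, which requires combining the slope integrability $p<2\delta+1$ with the joint-moment threshold $2\delta$. If a plain Hölder split fails, the fallback is Vitali's theorem. I would prove uniform integrability of $t^{-2}(G(t)-1-\beta tAS_1)$ via Taylor's formula with integral remainder,
\[
t^{-2}\Bigl(G(t)-1-\beta tA\,S_1\Bigr)=\int_0^1(1-\lambda)\,G''(\lambda t)\,d\lambda,
\]
interpreted on the set where the $\xi(tb_j)$ do not vanish. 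I would then bound $\E|G''(\lambda t)|^{s}$ for some $s>1$, uniformly in $\lambda$ and $b$, using Lemma~\ref{lem:analytic-moments} together with the sharp joint moment estimate. The local uniformity in $b$ then follows because all the bounds depend on $b$ only through $R$.
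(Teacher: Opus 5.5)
There is a genuine gap. Your pathwise expansion and your symmetry argument for $\E S_1=0$ are fine, but the whole content of the lemma is interchanging expectation with the expansion, and the route you sketch does not close on all of $m\beta>1$.

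\emph{The H\"older split.} Keeping undifferentiated factors $|\xi(tb_k)|^\beta$ next to derivative factors and controlling them by the joint real-axis bound (total exponent at most $m\beta$) runs out of room. Take a term in which one factor is replaced by $\xi''(0)$ or by $\sup_{|z|\le R}|\xi'''(z)|$, which lies in $L^r$ only for $r<m\beta$. Keep the other $m-2$ factors, whose product lies in $L^{r'}$ only for $r'\le m/(m-2)$. H\"older then needs $\frac1{m\beta}+\frac{m-2}{m}<1$, i.e.\ $\beta>\frac12$. Put differently, a derivative factor costs exponent $1$ while the slack is only $\beta$. Since $m\beta>1$ allows $\beta\le\frac12$ once $m\ge3$, this fails even for plain integrability.

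\emph{The bad event.} The joint bound puts $G$ in $L^s$ only for $s\le m/(m-1)$. So $\E[G;E_t^c]\le\|G\|_s\,\mathbb P(E_t^c)^{1-1/s}$ is $o(t^2)$ only if $\mathbb P(E_t^c)=o(t^{2m})$, and with $s$ near $1$ it gives only a tiny power of $|t|$. Moreover, $\{\tau\le R|t|\}$ is governed by $\sup_{|u|\le R|t|}|\xi'(u)|$, not by $\xi'(0)$. So Proposition~\ref{prop:slope} does not give $O(|t|^p)$ with $p>2$; the supremum has moments only below $2\delta$, which may be below $2$.

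\emph{Negative powers and the fallback.} Positivity of $\xi$ near $0$, or a random threshold, gives no uniform control of the powers $|\xi|^{\beta-2},|\xi|^{\beta-3}$ in your remainder. For $\beta<2$ the Vitali fallback needs negative moments $\E|\xi(u)|^{-(2-\beta)s}$, which nothing cited provides. For $\beta\le1$ the integral-remainder formula is invalid when a zero lies in $[0,tb_j]$, and that is exactly the event whose contribution must be shown to be $o(t^2)$.

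The missing idea is to dominate the undifferentiated factors by the derivative variables themselves, so that no joint moment bound is needed. Write $U_j=\xi(tb_j)-1=tb_jX+\frac{t^2b_j^2}{2}Y+R_j$ with $X=\xi'(0)$, $Y=\xi''(0)$, $M=\sup_{|z|\le R}|\xi'''(z)|$ and $|R_j|\le C|t|^3M$. Then $G=F(U)$ with $F(u)=\prod_j|1+u_j|^\beta$.

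On the \emph{deterministic} good event $\{C|tX|,\,Ct^2|Y|,\,C|t|^3M\le|t|^\theta\}$ one has $\|U\|\le3|t|^\theta$. There $F$ is Taylor-expanded at $u=0$, away from all zeros and with no negative powers. On the complement, use the pathwise growth bound $F(u)\le C(1+\|u\|^D)$ with $D=(m-1)\beta$, split according to which of the three quantities is largest, and apply Markov.

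This needs only one-variable moments:
\begin{itemize}
\item $X\in L^p$ with $\max\{2,D\}<p<m\beta+1$. The trace moment beyond order $2$, known only at the single point $0$, is what makes $\mathbb P(C|tX|>|t|^\theta)=o(t^2)$.
\item $Y,M\in L^q$ with $\max\{1,D\}<q<m\beta$, from Lemma~\ref{lem:analytic-moments}.
\item $2q(1-1/p)>1$, to control $|t|\E[|X|;\,Ct^2|Y|>|t|^\theta]$.
\end{itemize}
The slack $D<m\beta$ is spent on choosing $q>D$, and all the constraints can be met simultaneously because $m\beta>1$. This is Lemma~\ref{lem:expectation-taylor} as used in the paper.
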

\begin{proof}
Put $a:=m\beta=2\delta$ and $D:=d\beta=(m-1)\beta$. Then $a>1$ and $D<a$. Choose exponents
\begin{equation}\label{eq:pq-choice}
\max\{2,D\}<p<a+1,
\qquad
\max\{1,D\}<q<a,
\qquad
2q\left(1-\frac1p\right)>1.
\end{equation}
Such a choice is always possible. Indeed, as $p\uparrow a+1$ and $q\uparrow a$, the left side of the last inequality tends to $2a^2/(a+1)>1$ because $a>1$.

Set
\[
X:=\xi'(0),\qquad Y:=\xi''(0),
\qquad M_R:=\sup_{|z|\le R}|\xi'''(z)|.
\]
By Proposition~\ref{prop:slope}, $X\in L^p$. By Lemma~\ref{lem:analytic-moments}, $Y,M_R\in L^q$ for every fixed $R$. Taylor's theorem for the entire function $\xi$ gives, for $b$ in a fixed compact set $K$ and $|t|$ sufficiently small,
\begin{equation}\label{eq:xi-random-taylor}
\xi(tb_j)-1
=t b_jX+\frac{t^2b_j^2}{2}Y+R_j(t,b),
\qquad
|R_j(t,b)|\le C_K|t|^3M_R,
\end{equation}
uniformly in $1\le j\le d$ and $b\in K$.

Apply Lemma~\ref{lem:expectation-taylor} in Appendix~\ref{app:taylor} to
\[
F(u_1,\ldots,u_d)=\prod_{j=1}^d|1+u_j|^\beta
\]
and to the expansion \eqref{eq:xi-random-taylor}.  At the origin,
\[
\partial_jF(0)=\beta,
\qquad
\partial_{jj}F(0)=\beta(\beta-1),
\qquad
\partial_{jk}F(0)=\beta^2\quad(j\ne k).
\]
The appendix lemma therefore yields, uniformly for $b\in K$,
\begin{align}\label{eq:joint-before-S}
H_b(t)
&=1+\beta tA(b)\E X\\
&\quad+\frac{t^2}{2}\left\{\beta B(b)\E Y+
\left[\beta(\beta-1)B(b)+\beta^2(A(b)^2-B(b))\right]\E X^2\right\}
+o(t^2).\notag
\end{align}
Using $X=-S_1$ and $Y=S_1^2-S_2$ from \eqref{eq:derivatives}, the coefficient of $\E S_1^2$ simplifies to $\beta^2A(b)^2$, and \eqref{eq:joint-expansion} follows. Reflection symmetry gives $\E S_1=0$.
\end{proof}

\begin{remark}\label{rem:no-palm}
The proof of Lemma~\ref{lem:joint} uses neither a Campbell--Mecke estimate nor a moment bound under reduced Palm measures. This is deliberate: Theorem 2.11 of \cite{AssiotisNajnudel2026} controls joint powers only up to total exponent $2\delta$, and the direct exponential-truncation argument would require more quantitative tail room near $2\delta=1$. The derivative-moment route above uses exactly the available ranges $p<2\delta+1$ for the trace slope and $q<2\delta$ for higher analytic derivatives.
\end{remark}

\section{Proof of the fusion theorem}\label{sec:proof}
\begin{proof}[Proof of Theorem~\ref{thm:main}]
In \eqref{eq:AN}, set
\[
b_j=a_{j+1}-a_1,\qquad 1\le j\le m-1.
\]
Lemma~\ref{lem:joint}, reflection symmetry $\E S_1=0$, and Proposition~\ref{prop:moments} give
\begin{align*}
\E\prod_{j=2}^m\bigl|\xi^{\beta,m\beta/2}(\varepsilon(a_j-a_1))\bigr|^\beta
&=1+\frac{\beta^2\varepsilon^2}{2}\E S_1^2
\left[\left(\sum_{j=1}^{m-1}b_j\right)^2-m\sum_{j=1}^{m-1}b_j^2\right]
+o(\varepsilon^2).
\end{align*}
The elementary identity
\begin{equation}\label{eq:shape-id}
m\sum_{j=1}^{m-1}(a_{j+1}-a_1)^2
-\left(\sum_{j=1}^{m-1}(a_{j+1}-a_1)\right)^2
=\sum_{1\le i<j\le m}(a_i-a_j)^2=V(a)
\end{equation}
therefore yields
\[
\E\prod_{j=2}^m\bigl|\xi^{\beta,m\beta/2}(\varepsilon(a_j-a_1))\bigr|^\beta
=1-\frac{\beta^2}{8(m\beta-1)(2m+1)}V(a)\varepsilon^2+o(\varepsilon^2).
\]
The compact-uniform assertion in Lemma~\ref{lem:joint} gives local uniformity in the collision profile $a$. Multiplying by the exact Vandermonde factor in \eqref{eq:AN} proves \eqref{eq:main}.
\end{proof}

\begin{proof}[Proof of Corollary~\ref{cor:pair}]
Set $m=2$, $a_1=0$, $a_2=1$ in Theorem~\ref{thm:main}. Then $V(a)=1$ and
\[
8(m\beta-1)(2m+1)=40(2\beta-1),
\]
which gives \eqref{eq:pair}.
\end{proof}

\begin{proof}[Proof of Theorem~\ref{thm:inverse}]
This is exactly \eqref{eq:S2value} in Proposition~\ref{prop:moments}.
\end{proof}

\section{Checks at the classical values}\label{sec:checks}
The explicit $\beta=2$ and $\beta=4$ pair correlations provide independent checks of Corollary~\ref{cor:pair}. We use the formulas recorded in \cite{QuValko2025}.

For $\beta=2$,
\[
\rho^{(2)}_2(0,x)=\frac1{4\pi^2}\left[1-\left(\frac{\sin(x/2)}{x/2}\right)^2\right].
\]
Since
\[
1-\left(\frac{\sin(x/2)}{x/2}\right)^2
=\frac{x^2}{12}\left(1-\frac{x^2}{30}+O(x^4)\right),
\]
the normalized second-order coefficient is $-1/30$. Formula~\eqref{eq:pair} gives
\[
-\frac{2^2}{40(4-1)}=-\frac1{30}.
\]

For $\beta=4$, the classical expression is
\[
\rho^{(2)}_4(0,x)=\frac1{4\pi^2}\left[1-\operatorname{sinc}^2(x)+\operatorname{sinc}'(x)\int_0^x\operatorname{sinc}(t)\,dt\right],
\qquad \operatorname{sinc}(x)=\frac{\sin x}{x}.
\]
A Taylor expansion gives
\[
1-\operatorname{sinc}^2(x)+\operatorname{sinc}'(x)\int_0^x\operatorname{sinc}(t)\,dt
=\frac{x^4}{135}\left(1-\frac{2}{35}x^2+O(x^4)\right).
\]
Our formula gives
\[
-\frac{4^2}{40(8-1)}=-\frac2{35}.
\]
Thus both classical pair cases agree exactly with Corollary~\ref{cor:pair}.

There is also a useful check of the full $m$-point shape dependence. For $\beta=2$,
\[
\rho^{(m)}_2(x_1,\ldots,x_m)
=\det\left[\frac{\sin((x_i-x_j)/2)}{\pi(x_i-x_j)}\right]_{i,j=1}^m,
\]
with the diagonal interpreted as $1/(2\pi)$. Direct expansion for the collision profiles $(0,1,2)$ and $(0,1,2,3)$ gives
\begin{align*}
\rho^{(3)}_2(0,\varepsilon,2\varepsilon)
&=\frac{\varepsilon^6}{17280\pi^3}\left(1-\frac3{35}\varepsilon^2+O(\varepsilon^4)\right),\\
\rho^{(4)}_2(0,\varepsilon,2\varepsilon,3\varepsilon)
&=\frac{\varepsilon^{12}}{96768000\pi^4}\left(1-\frac{10}{63}\varepsilon^2+O(\varepsilon^4)\right).
\end{align*}
For the two profiles, $V(a)=6$ and $V(a)=20$, respectively. The coefficient in Theorem~\ref{thm:main} is therefore $-3/35$ for $m=3$ and $-10/63$ for $m=4$, exactly matching the determinant expansion. These checks test the full shape formula, not only the pair specialization.

\section{The inverse-square threshold and a conjectural phase transition}\label{sec:threshold}
The pole in \eqref{eq:inverse} suggests a structural threshold. The natural local model, consistent with confluent Palm repulsion, is that the one-point intensity of the fused environment behaves near the origin like
\[
\rho^{(1)}_{\mathrm{fused}}(x)\asymp |x|^{m\beta},
\]
then
\[
\E\sum_x x^{-2}
\quad\text{has local model}\quad
\int_0^1x^{m\beta-2}\,dx.
\]
This model is finite exactly when $m\beta>1$, logarithmically divergent at $m\beta=1$, and power divergent when $m\beta<1$. We do not use this local-intensity statement in the proof above; proving the critical and subcritical versions with uniform constants is part of Conjecture~\ref{conj:transition}.

This is the origin of Conjecture~\ref{conj:transition}. A formal split into the scales $|x|\lesssim|\varepsilon|$, $|\varepsilon|\ll|x|\ll1$, and $|x|\gtrsim1$ suggests that the supercritical Taylor term $\varepsilon^2$ is replaced at criticality by $\varepsilon^2\log(1/|\varepsilon|)$, while below the threshold a nearby environmental particle contributes at order $|\varepsilon|^{m\beta+1}$. Establishing the critical and subcritical constants requires uniform control of reduced-Palm correlation functions at the collision scale and is not used anywhere in the proof of Theorem~\ref{thm:main}.

\begin{remark}[Why the threshold is natural]
The leading factor $\prod_{i<j}|x_i-x_j|^\beta$ is the usual log-gas repulsion and exists for every $\beta>0$. The predicted transition concerns only the next term and corresponds to the point at which the fused environment is expected to cease to have a finite inverse-square response. Thus the predicted transition does not contradict the smooth leading fusion law proved in \cite{AssiotisNajnudel2026}.
\end{remark}

\section{Further directions}
We record three problems suggested by the proof.
\begin{enumerate}[label=(\roman*)]
\item \emph{Critical and subcritical fusion.} Prove Conjecture~\ref{conj:transition}, including the exact coefficients. The expected logarithmic law at $m\beta=1$ should arise as the finite part of the pole in \eqref{eq:inverse}.
\item \emph{Higher fusion coefficients.} For $m\beta$ sufficiently large, higher inverse moments of the Hua--Pickrell environment become integrable. The same combination of Ward identities and stochastic-zeta Taylor coefficients should produce higher-order terms, with new integrability thresholds.
\item \emph{Universality beyond the exact $\Sine_\beta$ limit.} It would be natural to show that the second-order coefficient in Theorem~\ref{thm:main} is the universal microscopic fusion correction for general one-dimensional beta log-gases after bulk unfolding. Such a result would require quantitative universality at the level of fused Palm measures, rather than only ordinary correlation convergence.
\end{enumerate}

\appendix

\section{A quantitative expectation--Taylor lemma}\label{app:taylor}
The elementary lemma below is used to pass from the pathwise Taylor expansion of the stochastic-zeta entire function to a second-order expansion after expectation without assuming arbitrarily high moments.

\begin{lemma}[Expectation--Taylor lemma]\label{lem:expectation-taylor}
Let $d\ge1$, $\gamma>0$, and put $D=d\gamma$. Let $K\subset\R^d$ be compact. Suppose that real random variables $X,Y,M$ satisfy, for some $p,q$,
\begin{equation}\label{eq:app-pq}
p>\max\{2,D\},\qquad q>\max\{1,D\},\qquad
2q\left(1-\frac1p\right)>1,
\end{equation}
and $X\in L^p$, $Y,M\in L^q$. Suppose, uniformly for $b\in K$ and $1\le j\le d$,
\begin{equation}\label{eq:app-U}
U_j(t,b)=tb_jX+\frac{t^2b_j^2}{2}Y+R_j(t,b),
\qquad |R_j(t,b)|\le C_K|t|^3M.
\end{equation}
Then, with
\[
F(u):=\prod_{j=1}^d|1+u_j|^\gamma,
\qquad A(b):=\sum_jb_j,
\qquad B(b):=\sum_jb_j^2,
\]
one has, uniformly for $b\in K$,
\begin{align}\label{eq:app-expansion}
\E F(U(t,b))
&=1+\gamma tA(b)\E X\\
&\quad+\frac{t^2}{2}\left\{\gamma B(b)\E Y+
\left[\gamma(\gamma-1)B(b)+\gamma^2(A(b)^2-B(b))\right]\E X^2\right\}
+o(t^2).\notag
\end{align}
\end{lemma}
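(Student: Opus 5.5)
The natural route is to truncate on the event where every $U_j$ is small, so that $F$ is smooth there. I then Taylor expand $F$ to second order on that event, and show that everything discarded is $o(t^2)$ using only the moments $X\in L^p$ and $Y,M\in L^q$ together with the numerical conditions \eqref{eq:app-pq}. Throughout, $|t|\le1$ and $b\in K$, so $|b_j|\le C_K$.

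\textbf{Step 1 (good event).} Let
\[
G_t:=\{|t||X|\le c,\ t^2|Y|\le c,\ |t|^3M\le c\}
\]
with $c=c(K)$ chosen so that $\max_j|U_j|\le 1/2$ on $G_t$. On the cube $\{|u_j|\le1/2\}$ the function $F$ is $C^3$, with $F(0)=1$, $\partial_jF(0)=\gamma$, $\partial_{jj}F(0)=\gamma(\gamma-1)$ and $\partial_{jk}F(0)=\gamma^2$. Writing $\Theta(u):=F(u)-1-\nabla F(0)\cdot u-\tfrac12u^{T}\nabla^2F(0)u$, this gives
\[
|\Theta(u)|\le C|u|^2\min(|u|,1)\qquad\text{on }\{|u_j|\le1/2\}.
\]

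\textbf{Step 2 (bad event).} By Chebyshev, $P(G_t^c)\le C(|t|^p+|t|^{2q}+|t|^{3q})$, which is $o(t^2)$ because $p>2$ and $q>1$. The growth bound $F(U)\le C(1+|tX|+t^2|Y|+|t|^3M)^{D}$ controls $F$ off $G_t$. The diagonal contributions are handled directly: for instance
\[
\E\bigl[|tX|^D\mathbf 1_{\{|tX|>c\}}\bigr]\le C|t|^p\E|X|^p,
\]
and $t^2|Y|$, $|t|^3M$ on their own tails are treated the same way using $q>D$. The mixed contributions, where one variable is large on the tail event of another, are handled by Hölder: one factor carries its own moment and the other a power of the probability of the bad event. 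The same estimates give $\E[|U_j|\mathbf 1_{G_t^c}]=o(t^2)$. The critical term here is
\[
|t|\,\E\bigl[|X|\mathbf 1_{\{t^2|Y|>c\}}\bigr]\le |t|\,\|X\|_p\,P(t^2|Y|>c)^{1-1/p}\le C|t|^{1+2q(1-1/p)},
\]
which is $o(t^2)$ precisely by the hypothesis $2q(1-1/p)>1$. This is where that condition is used.

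\textbf{Step 3 (expanding on $G_t$).} The linear term contributes $\gamma\sum_j\E[U_j\mathbf 1_{G_t}]$. By Step 2 this equals $\gamma\sum_j\E U_j+o(t^2)$, which in turn equals $\gamma tA(b)\E X+\tfrac{\gamma t^2}{2}B(b)\E Y+o(t^2)$, since $|\E R_j|\le C|t|^3\E M$.

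In the quadratic form $\tfrac12 U^{T}\nabla^2F(0)U$, the principal part is $\tfrac{t^2}{2}X^2\,b^{T}\nabla^2F(0)b$. Expanding $b^{T}\nabla^2F(0)b$ gives exactly
\[
\gamma(\gamma-1)B(b)+\gamma^2\bigl(A(b)^2-B(b)\bigr),
\]
and removing the indicator costs only $t^2\E[X^2\mathbf 1_{G_t^c}]\to0$ relative to $t^2$.

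The other quadratic cross terms are products of distinct terms of $U_j$, namely $t^3XY$, $t^4Y^2$, and products involving $R_j$. On $G_t$ they are bounded using the truncations, for example
\[
|t^3XY|\le |t|^2|X|\min(c,|t||Y|)\quad\text{and}\quad t^4Y^2\le c^{2-r}(t^2|Y|)^r
\]
with $r=\min(q,2)>1$. Their expectations are then $o(t^2)$ by dominated convergence and $q>1$.

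Finally, for the cubic remainder, $|\Theta(U)|\mathbf 1_{G_t}\le C|U|^2\min(|U|,1)$. Dividing by $t^2$, the $X$-contribution $C X^2\min(|t X|,1)$ tends to $0$ almost surely and is dominated by $X^2\in L^1$. The $Y$- and $M$-contributions are again controlled by $(t^2|Y|)^r$ and $(|t|^3M)^r$ with $r>1$. All constants depend only on $K$, which gives uniformity in $b$.

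\textbf{Main obstacle.} The hard part is the bookkeeping in Step 2, and specifically the mixed terms pairing $X$ with the tail of $Y$ (or of $M$), since we have neither $X\in L^3$ nor $Y\in L^2$. The approach is to organise every discarded term as (moment of one variable) $\times$ (probability of a tail) via Hölder and verify the exponent exceeds $2$. The only term whose exponent is close to $2$ is $|t|\E[|X|\mathbf 1_{\{t^2|Y|>c\}}]$, and the hypothesis $2q(1-1/p)>1$ is exactly what makes its exponent $1+2q(1-1/p)$ exceed $2$.
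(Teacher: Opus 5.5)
Your proof is correct and has the same architecture as the paper's: a good event and its complement, a second-order Taylor expansion of $F$ on the good event, and Markov/H\"older bounds on the tails, with the hypothesis $2q(1-1/p)>1$ entering exactly through $|t|\,\E[|X|\,;\,t^2|Y|\ \text{large}]$; the only real difference is that you truncate at a fixed level $c$ and finish the good-event terms by dominated convergence, whereas the paper truncates at the shrinking level $|t|^\theta$ and tracks explicit powers of $t$, which is why it needs the auxiliary exponent $\theta$. One slip to fix: $G_t$ controls $|tX|$ and $t^2|Y|$ but not $|t||Y|$, so the cross term should be bounded by $t^{-2}|t^3XY|\mathbf 1_{G_t}\le |Y|\min(c,|tX|)$, which tends to $0$ almost surely and is dominated by $c|Y|\in L^1$, after which your argument goes through unchanged.
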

\begin{proof}
Only the tail bookkeeping requires comment.  Near the origin $F$ is $C^2$ and
\[
F(u)=F(0)+DF(0)u+\frac12D^2F(0)[u,u]+\omega(\|u\|)\|u\|^2,
\qquad \omega(r)\downarrow0.
\]
Globally, for a constant depending only on $d$ and $\gamma$,
\begin{equation}\label{eq:F-growth}
F(u)\le C(1+\|u\|^D).
\end{equation}
Choose $0<\theta<1$ sufficiently small that
\begin{equation}\label{eq:theta}
p(1-\theta)>2,\qquad q(2-\theta)>2,
\qquad 1+q(2-\theta)(1-1/p)>2.
\end{equation}
This is possible by \eqref{eq:app-pq}. Put $\varepsilon=|t|^\theta$ and, with constants enlarged uniformly over $b\in K$,
\[
W_1=C_K|tX|,\qquad W_2=C_Kt^2|Y|,\qquad W_3=C_K|t|^3M,
\qquad G_t:=\{W_1,W_2,W_3\le\varepsilon\}.
\]
On $G_t$, \eqref{eq:app-U} implies $\|U(t,b)\|\le3\varepsilon$. Moreover
\[
\E[W_1^2;G_t]=O(t^2),
\]
and, when $q<2$,
\[
\frac1{t^2}\E[W_2^2;G_t]
\le C t^2\left(\frac{\varepsilon}{t^2}\right)^{2-q}\E|Y|^q=o(1),
\]
while the cases $q\ge2$ and the $W_3$ term are easier. Hence the Taylor remainder on $G_t$ is $o(t^2)$ uniformly on $K$.

It remains to replace the quadratic expression in $U$ by the displayed terms in \eqref{eq:app-expansion}. Terms containing $Y^2$ or $M^2$ are $o(t^2)$ by the preceding truncated estimates. For the only potentially delicate mixed term, let $q'=q/(q-1)$. If $q'\le p$, then $XY\in L^1$ by H\"older. If $q'>p$, truncation by $G_t$ and H\"older give
\[
|t|\E[|XY|;G_t]
\le C|t|\left(\frac{\varepsilon}{|t|}\right)^{1-p/q'}
= C|t|^{p/q'+\theta(1-p/q')}=o(1).
\]
Thus every mixed contribution beyond the terms displayed in \eqref{eq:app-expansion} is $o(t^2)$.

We finally remove $G_t$. Markov's inequality and \eqref{eq:theta} give
\begin{align*}
\mathbb P(W_1>\varepsilon)&\le C|t|^p\varepsilon^{-p}=o(t^2),\\
\mathbb P(W_2>\varepsilon)&\le Ct^{2q}\varepsilon^{-q}=o(t^2),\\
\mathbb P(W_3>\varepsilon)&\le C|t|^{3q}\varepsilon^{-q}=o(t^2).
\end{align*}
Partition $G_t^c$ into three disjoint events $E_k$ according to which $W_k$ is largest (ties are assigned in any fixed way). On $E_k$ one has $\|U\|\le3W_k$. Using \eqref{eq:F-growth}, $p>D$ and $q>D$ therefore reduces the polynomial-growth tail to
\begin{align*}
\E[W_1^D;W_1>\varepsilon]&\le C|t|^p\varepsilon^{D-p}=o(t^2),\\
\E[W_2^D;W_2>\varepsilon]&\le Ct^{2q}\varepsilon^{D-q}=o(t^2),\\
\E[W_3^D;W_3>\varepsilon]&\le C|t|^{3q}\varepsilon^{D-q}=o(t^2).
\end{align*}
Hence both the constant and polynomial-growth parts of $F(U)$ on $G_t^c$ are $o(t^2)$. For the linear Taylor term, the same partition gives, on $\{W_1>\varepsilon\}$,
\[
|t|\E[|X|;W_1>\varepsilon]
\le C|t|^p\varepsilon^{1-p}=o(t^2),
\]
while on $\{W_2>\varepsilon\}$ H\"older gives
\[
|t|\E[|X|;W_2>\varepsilon]
\le C|t|^{1+q(2-\theta)(1-1/p)}=o(t^2)
\]
by \eqref{eq:theta}; the $W_3$ event is smaller. Finally, the terms $t^2X^2$ and $t^2Y$ restricted to $G_t^c$ are $o(t^2)$ by absolute continuity of the integrals, because $X^2,Y\in L^1$ and $\mathbb P(G_t^c)\to0$. Combining the good- and bad-event estimates proves \eqref{eq:app-expansion}, uniformly for $b\in K$.
\end{proof}

\section*{AI-assisted tools}
OpenAI ChatGPT (GPT-5.6 Sol) was used during exploratory and manuscript-preparation stages for literature discovery, proposing candidate derivations, symbolic consistency checks, stress-testing intermediate arguments, and assistance with organization and drafting. AI outputs were treated as unverified suggestions rather than mathematical authority. The author checked the mathematical claims, citations, and retained derivations against the cited literature and/or by direct calculation as applicable, and takes full responsibility for the content of the manuscript.


\small
\begin{thebibliography}{11}
\bibitem{AGS2022}
T. Assiotis, M. A. Gunes, and A. Soor,
\emph{Convergence and an explicit formula for the joint moments of the circular Jacobi $\beta$-ensemble characteristic polynomial},
Math. Phys. Anal. Geom. \textbf{25} (2022), Art. 15.
\href{https://doi.org/10.1007/s11040-022-09427-4}{doi:10.1007/s11040-022-09427-4}.

\bibitem{AssiotisNajnudel2026}
T. Assiotis and J. Najnudel,
\emph{Moments of C$\beta$E field partition function, $\Sine_\beta$ correlations and stochastic zeta},
arXiv:2602.08739, 2026.

\bibitem{BEY2014}
P. Bourgade, L. Erd\H{o}s, and H.-T. Yau,
\emph{Universality of general $\beta$-ensembles},
Duke Math. J. \textbf{163} (2014), 1127--1190.

\bibitem{DHLM2021}
D. Dereudre, A. Hardy, T. Lebl\'e, and M. Ma\"ida,
\emph{DLR equations and rigidity for the Sine-beta process},
Comm. Pure Appl. Math. \textbf{74} (2021), 172--222.

\bibitem{Forrester2010}
P. J. Forrester,
\emph{Log-Gases and Random Matrices},
London Mathematical Society Monographs Series, vol. 34, Princeton University Press, 2010.

\bibitem{LiValko2022}
Y. Li and B. Valk\'o,
\emph{Operator level limit of the circular Jacobi $\beta$-ensemble},
Random Matrices Theory Appl. \textbf{11} (2022), no. 4, 2250043.
\href{https://doi.org/10.1142/S2010326322500435}{doi:10.1142/S2010326322500435}.


\bibitem{QiuEtAl2026}
S. Qiu, Y. Qu, L. Yuan, B. Valk\'o, and S. Venancio,
\emph{The pair correlation function of the $\Sine_6$ process},
arXiv:2607.26223, 2026.

\bibitem{QuValko2025}
Y. Qu and B. Valk\'o,
\emph{On the pair correlation function of the $\Sine_\beta$ process},
arXiv:2509.15446, 2025.

\bibitem{VV2009}
B. Valk\'o and B. Vir\'ag,
\emph{Continuum limits of random matrices and the Brownian carousel},
Invent. Math. \textbf{177} (2009), 463--508.
\href{https://doi.org/10.1007/s00222-009-0180-z}{doi:10.1007/s00222-009-0180-z}.

\bibitem{VV2017}
B. Valk\'o and B. Vir\'ag,
\emph{The $\Sine_\beta$ operator},
Invent. Math. \textbf{209} (2017), 275--327.
\href{https://doi.org/10.1007/s00222-016-0709-x}{doi:10.1007/s00222-016-0709-x}.

\bibitem{ValkoVirag2023}
B. Valk\'o and B. Vir\'ag,
\emph{Palm measures for Dirac operators and the $\Sine_\beta$ process},
Stochastic Process. Appl. \textbf{163} (2023), 106--135.
\href{https://doi.org/10.1016/j.spa.2023.05.011}{doi:10.1016/j.spa.2023.05.011}.
\end{thebibliography}
\end{document}